\newtheorem{observation}{Remark}[section]
\newtheorem{lemma}[observation]{Lemma}  
\newtheorem{theorem}[observation]{Theorem}
\newtheorem{definition}[observation]{Definition}
\newtheorem{example}[observation]{Example}
\newtheorem{remark}[observation]{Remark}
\newtheorem{proposition}[observation]{Proposition} 
\newtheorem{corollary}[observation]{Corollary}
\newcommand{\wand}{\ensuremath{
  \mathrel{\vbox{\offinterlineskip\ialign{
    \hfil##\hfil\cr
    $\star$\cr
    \noalign{\kern-1ex}
    $\vert$\cr
}}}}}
\newdimen\w@dth
\def\setw@dth#1#2{\setbox\z@\hbox{\scriptsize $#1$}\w@dth=\wd\z@
\setbox\@ne\hbox{\scriptsize $#2$}\ifnum\w@dth<\wd\@ne \w@dth=\wd\@ne \fi
\advance\w@dth by 1.2em}
\def\t@^#1_#2{\allowbreak\def\n@one{#1}\def\n@two{#2}\mathrel
{\setw@dth{#1}{#2}
\mathop{\hbox to \w@dth{\rightarrowfill}}\limits
\ifx\n@one\empty\else ^{\box\z@}\fi
\ifx\n@two\empty\else _{\box\@ne}\fi}}
\def\t@@^#1{\@ifnextchar_ {\t@^{#1}}{\t@^{#1}_{}}}
\def\t@left^#1_#2{\def\n@one{#1}\def\n@two{#2}\mathrel{\setw@dth{#1}{#2}
\mathop{\hbox to \w@dth{\leftarrowfill}}\limits
\ifx\n@one\empty\else ^{\box\z@}\fi
\ifx\n@two\empty\else _{\box\@ne}\fi}}
\def\t@@left^#1{\@ifnextchar_ {\t@left^{#1}}{\t@left^{#1}_{}}}
\def\two@^#1_#2{\def\n@one{#1}\def\n@two{#2}\mathrel{\setw@dth{#1}{#2}
\mathop{\vcenter{\hbox to \w@dth{\rightarrowfill}\kern-1.7ex
                 \hbox to \w@dth{\rightarrowfill}}%
       }\limits
\ifx\n@one\empty\else ^{\box\z@}\fi
\ifx\n@two\empty\else _{\box\@ne}\fi}}
\def\tw@@^#1{\@ifnextchar_ {\two@^{#1}}{\two@^{#1}_{}}}
\def\tofr@^#1_#2{\def\n@one{#1}\def\n@two{#2}\mathrel{\setw@dth{#1}{#2}
\mathop{\vcenter{\hbox to \w@dth{\rightarrowfill}\kern-1.7ex
                 \hbox to \w@dth{\leftarrowfill}}%
       }\limits
\ifx\n@one\empty\else ^{\box\z@}\fi
\ifx\n@two\empty\else _{\box\@ne}\fi}}
\def\t@fr@^#1{\@ifnextchar_ {\tofr@^{#1}}{\tofr@^{#1}_{}}}
\newdimen\W@dth
\def\setW@dth#1#2{\setbox\z@\hbox{$#1$}\W@dth=\wd\z@
\setbox\@ne\hbox{$#2$}\ifnum\W@dth<\wd\@ne \W@dth=\wd\@ne \fi
\advance\W@dth by 1.2em}
\def\T@^#1_#2{\allowbreak\def\N@one{#1}\def\N@two{#2}\mathrel
{\setW@dth{#1}{#2}
\mathop{\hbox to \W@dth{\rightarrowfill}}\limits
\ifx\N@one\empty\else ^{\box\z@}\fi
\ifx\N@two\empty\else _{\box\@ne}\fi}}
\def\T@@^#1{\@ifnextchar_ {\T@^{#1}}{\T@^{#1}_{}}}
\def\T@left^#1_#2{\def\N@one{#1}\def\N@two{#2}\mathrel{\setW@dth{#1}{#2}
\mathop{\hbox to \W@dth{\leftarrowfill}}\limits
\ifx\N@one\empty\else ^{\box\z@}\fi
\ifx\N@two\empty\else _{\box\@ne}\fi}}
\def\T@@left^#1{\@ifnextchar_ {\T@left^{#1}}{\T@left^{#1}_{}}}
\def\Tofr@^#1_#2{\def\N@one{#1}\def\N@two{#2}\mathrel{\setW@dth{#1}{#2}
\mathop{\vcenter{\hbox to \W@dth{\rightarrowfill}\kern-1.7ex
                 \hbox to \W@dth{\leftarrowfill}}%
       }\limits
\ifx\N@one\empty\else ^{\box\z@}\fi
\ifx\N@two\empty\else _{\box\@ne}\fi}}
\def\T@fr@^#1{\@ifnextchar_ {\Tofr@^{#1}}{\Tofr@^{#1}_{}}}
\def\Two@^#1_#2{\def\N@one{#1}\def\N@two{#2}\mathrel{\setW@dth{#1}{#2}
\mathop{\vcenter{\hbox to \W@dth{\rightarrowfill}\kern-1.7ex
                 \hbox to \W@dth{\rightarrowfill}}%
       }\limits
\ifx\N@one\empty\else ^{\box\z@}\fi
\ifx\N@two\empty\else _{\box\@ne}\fi}}
\def\Tw@@^#1{\@ifnextchar_ {\Two@^{#1}}{\Two@^{#1}_{}}}
\def\to{\@ifnextchar^ {\t@@}{\t@@^{}}}
\def\from{\@ifnextchar^ {\t@@left}{\t@@left^{}}}
\def\tofro{\@ifnextchar^ {\t@fr@}{\t@fr@^{}}}
\def\To{\@ifnextchar^ {\T@@}{\T@@^{}}}
\def\From{\@ifnextchar^ {\T@@left}{\T@@left^{}}}
\def\Two{\@ifnextchar^ {\Tw@@}{\Tw@@^{}}}
\def\Tofro{\@ifnextchar^ {\T@fr@}{\T@fr@^{}}}
\title{Fundamental Theorems of Calculus and Zinbiel Algebras}
\author{Jean-Simon Pacaud Lemay}
\begin{document}
\allowdisplaybreaks

\maketitle

 \begin{abstract} Derivations are linear operators which satisfy the Leibniz rule, while integrations are linear operators which satisfy the Rota-Baxter rule. In this paper, we introduce the notion of an FTC-pair, which consists of an algebra and module with a derivation and integration between them which together satisfy analogues of the two Fundamental Theorems of Calculus. In the special case of when an algebra is seen as a module over itself, we show that this sort of FTC-pair is precisely the same thing as an integro-differential algebra. We provide various constructions of FTC-pairs, as well as numerous examples of FTC-pairs including some based on polynomials, smooth functions, Hurwitz series, and shuffle algebras. Moreover, it is well known that integrations are closely related to Zinbiel algebras. The main result of this paper is that the category of FTC-pairs is equivalent to the category of Zinbiel algebras. \end{abstract}

 \noindent \small \textbf{Acknowledgements.} The author would like to thank Richard Blute for many useful discussions and their support of this project. The author would also like to thank the attendees of the Atlantic Category Theory Seminar (in particular Geoff Crutwell), the Australian Category Seminar (in particular Richard Garner, Ross Street, Steve Lack, and John Powers), and the Australian Algebra Conference for listening to this story, their questions, and the encouragement to write up this paper. This material is based upon work supported by the AFOSR under award number FA9550-24-1-0008. The author is also funded by an ARC DECRA (DE230100303). 

 \tableofcontents


\section{Introduction}

The two main branches of calculus are differential calculus and integral calculus, which are related via the two Fundamental Theorems of Calculus. The use of algebraic methods to study differential operators and integral operators is an important field of research and has been quite successful. 

Famously, the algebraic generalization of the differential operator is a \textbf{derivation} \cite{matsumura1970commutative} which is a linear operator from an algebra to a module which satisfies an analogue of the \textit{Leibniz rule} from calculus (Def \ref{def:der-int}): 
\[(fg)^\prime(x)= f^\prime(x) g(x) + f(x) g^\prime(x)\] 
Derivations are, of course, very well-known and studied, and they have important applications in algebra, algebraic geometry, differential geometry, and various other areas. In the special case that the algebra is seen as a module over itself, an algebra equipped with a derivation is called a \textbf{differential algebra} \cite{kaplansky1957introduction}. 

On the other hand, the algebraic generalization of the integral operator is called an \textbf{integration} \cite{bagnol2016shuffle} (also called a generalized Rota-Baxter operator \cite{uchino2008quantum} or a relative Rota-Baxter operator of weight $0$ \cite{bai2013relative}) and is a linear operator which instead goes from a module to an algebra which satisfies an analogue of an integral only version of the integration by parts rule called the \textit{Rota-Baxter rule} (Def \ref{def:der-int}): 
\[\left( \int\limits^x_a f(t) \mathsf{d}t \right) \left( \int\limits^x_a g(t) \mathsf{d}t \right) =  \int\limits^x_a f(t)  \left(\int\limits^t_a g(u) \mathsf{d}u \right) \mathsf{d}t + \int\limits^x_a \left(\int\limits^t_a f(u) \mathsf{d}u \right) g(t) \mathsf{d}t\] 
Again, in the special case of an algebra seen as a module over itself, an algebra equipped with an integration is called a \textbf{Rota-Baxter algebra} \cite{guo2012introduction}.

It is well established that integrations are closely related to Zinbiel algebras and dendriform algebras. \textbf{Zinbiel algebras} (Def \ref{def:Zinbiel-algebra}) were introduced by Loday in \cite{loday1995cup} (under the name dual Leibniz algebras), and the Zinbiel algebra operad is the Koszul dual of the Leibniz algebra operad (hence the name Zinbiel, which is Leibniz backwards). Zinbiel algebras have found applications throughout a variety of fields -- see \cite{alvarez2022algebraic} for a great list of interesting applications of Zinbiel algebras. Dendriform algebras were also introduced by Loday \cite{Loday2001} and can be understood as the ``non-commutative generalization'' of a Zinbiel algebra -- so, in particular, every Zinbiel algebra is a dendriform algebra \cite[Lemma 7.2]{Loday2001} (in fact Zinbiel algebras are also sometimes called commutative dendriform algebras \cite[Def 5.2.1.(a)]{guo2012introduction}). 

In the non-commutative setting, every integration induces a dendriform algebra \cite[Prop 2.10]{uchino2008quantum}, and conversely every dendriform algebra gives rise to an integration \cite[Sec 2.2]{uchino2008quantum}. Similarly in the commutative setting, every integration induces a Zinbiel algebra, and conversely, from every Zinbiel algebra we can construct an integration. However, these constructions are not inverses of each other since, in particular, not every integration arises from a dendriform/Zinbiel algebra. That said, Uchino in \cite[Sec 2.2]{uchino2008quantum} still showed that there is an adjunction between a certain category of integrations and the category of dendriform algebras. Afterwards, Bai, Guo, and Ni in \cite{bai2013relative} showed that dendriform algebras correspond precisely to \emph{invertible} integrations. It is straightforward to see that these results also hold for the commutative case, so we have an adjunction between a certain category of integrations and the category of Zinbiel algebras, and that Zinbiel algebras correspond precisely to integrations which are isomorphisms. 

While these results clearly show an already deep connection between Zinbiel algebras and integrations, there are two things we wish to address. The first is that the main results in \cite{bai2013relative} are expressed in terms of bijection correspondences between equivalence classes of integrations and dendriform/Zinbiel algebras, and not as an equivalence of categories. As such, it would be nice to explicitly have an equivalence between the category of Zinbiel algebras and a certain category of integrations. The second is that the requirement of working with invertible integrations is somewhat strong and excludes many natural examples, including \emph{unital} algebras. It turns out, that if one wishes to add units back into the picture and figure out what kind of integrations Zinbiel algebras correspond to in the unital setting, one must add derivations to the story! 

In calculus, the differential operator and the integral operator are related via the two \textit{Fundamental Theorem of Calculus}. Thus, it is natural to study the algebraic analogues of this phenomenon. In light of this, the main novel concept that we introduce and study in this paper is that of an \textbf{FTC-pair} (Def \ref{def:FTCpair}), which consists of a derivation and an integration which together satisfy analogues of the two Fundamental Theorems of Calculus. It turns out that in the same way that a derivation is a ``module generalization'' of a differential algebra and that an integration is a ``module generalization'' of a Rota-Baxter algebra, an FTC-pair is a ``module generalization'' of an \textbf{integro-differential algebra} \cite{guo2014integro}. That said, while an FTC-pair is axiomatized via the Fundamental Theorems of Calculus, an integro-differential algebra is instead axiomatized in terms of an equation called the \textit{hybrid Rota-Baxter rule}. However, we show that assuming the two Fundamental Theorems of Calculus is equivalent to assuming one of them and the hybrid Rota-Baxter rule (Prop \ref{prop:FTC-integro}). This implies that every integro-differential algebra is indeed an FTC-pair in the case that the algebra is viewed as a module over itself. Moreover, it is also possible to construct FTC-pairs from both integrations (Lemma \ref{lem:int-FTC}) and derivations (Lemma \ref{lem:der-FTC}). 

The main result of this paper (Thm \ref{thm:FTC-Zin}) is that the category of FTC-pairs (Def \ref{def:FTC-category}) is equivalent to a larger category of Zinbiel algebras (Def \ref{def:Zinbiel-bigcat}). From this, we then obtain that Zinbiel algebras correspond precisely to \textit{augmented} FTC-pairs (Def \ref{def:aFTC}), or in other words, that the category of Zinbiel algebras over a fixed based ring is equivalent to the category of augmented FTC-pairs (Cor \ref{cor:aFTC-Zin}). This latter result makes sense since the two Fundamental Theorems of Calculus amount to saying that the integration and the derivation are inverses of each other up to constants. Thus, this result still falls in line with the results of \cite{bai2013relative}. Nevertheless, FTC-pairs provide a new perspective on Zinbiel algebras and, in particular, provide a new connection between derivations and Zinbiel algebras (which, to the best of the author's knowledge, has not been previously done). 

It is worth mentioning that while in this paper we work in the commutative case, it is straightforward to see that the results of this paper can also work in the non-commutative case. So, we can easily generalize the main results to obtain a correspondence between dendriform algebras and the non-commutative version of FTC-pairs as well. Our choice to work in the commutative setting and with Zinbiel algebras comes from the fact that the inspiration for the story of this paper arose from working with \textit{differential categories}. 

We conclude this introduction by discussing the original motivation for the results of this paper. Another approach for studying differentiation is that of the theory of differential categories \cite{Blute2019}, which uses category theory to study the foundation of differential calculus. Briefly, a differential category is a symmetric monoidal category with a (co)monad which comes equipped with a \textbf{deriving transformation}, which is a generalization of a derivation for each (co)free (co)algebra of the monad. Differential categories have been quite successful, having found many applications in computer science and have also been able to generalize various notions related to differentiation such as derivations, differential algebras, de Rham cohomology, etc. 

One can also study generalizations of integrations and the Fundamental Theorems of Calculus in a differential category with \textit{antiderivatives} \cite{cockett_lemay_2018}. As such, in future work, it will be interesting to study generalized versions of FTC-pairs in a differential category as well. This is of particular interest since it turns out that for certain well-behaved (and important) differential categories, having antiderivatives is completely determined by an FTC-pair (or equivalently integro-differential algebra) structure on the (co)free (co)algebra over the monoidal unit \cite{lemay2020convenient}. Of course, said result was not stated in this way, and the connection was only made later. We conjecture that it is possible to strengthen this result and obtain a connection between having antiderivatives and having that every (co)free (co)algebra of the (co)monad belongs to an FTC-pair. This potentially opens the door to re-expressing antiderivatives in differential categories via Zinbiel algebras (since differential categories always assume a commutative setting by definition). 

Thus, this paper establishes the groundwork for the use of FTC-pairs/integro-differential algebras and Zinbiel algebras in future work on integration and the Fundamental Theorems of Calculus in differential categories. 

\section{FTC-Pairs}

In this section, we introduce the notion of an \emph{FTC-pair} (where FTC is shorthand for Fundamental Theorems of Calculus), which is the main novel concept of study of this paper. Briefly, an FTC-pair consists of a derivation and integration which together satisfy analogues of the two Fundamental Theorems of Calculus. We will explain how, in the same way that a derivation (resp. an integration) is a ``module generalization'' of a differential algebra (resp. a Rota-Baxter algebra), an FTC pair is a ``module generalization'' of an integro-differential algebra \cite{guo2014integro}. 

Let us begin by reviewing the definitions of derivations and integrations. As mentioned above, integrations have also been called generalized Rota-Baxter operators \cite{uchino2008quantum} or relative Rota-Baxter operators of weight $0$ \cite{bai2013relative} for slightly more general settings. Here, we follow the terminology used in \cite{bagnol2016shuffle} and use the term integration since we are relating the concept to derivations and the Fundamental Theorems of Calculus.  

\begin{definition}\label{def:der-int} Let $k$ be a commutative ring, $A$ a commutative (unital and associative) $k$-algebra, and $M$ an (left) $A$-module. 
\begin{enumerate}[{\em (i)}] 
\item A \textbf{derivation} \cite[Def 26.A]{matsumura1970commutative} is a $k$-linear map $\mathsf{D}: A \to M$ which satisfies the \textbf{Leibniz rule}, that is, the following equality holds for  all $a,b \in A$:
\begin{equation}\label{leibniz}\begin{gathered} 
\mathsf{D}(ab) = a \mathsf{D}(b) + b \mathsf{D}(a) 
 \end{gathered}\end{equation}
 \item An \textbf{integration} \cite[Def 6.2]{bagnol2016shuffle} is a $k$-linear map $\mathsf{P}: M \to A$ which satisfies the \textbf{Rota-Baxter rule}, that is, the following equality holds for all $m,n \in M$:  
\begin{equation}\label{Rota-Baxter}\begin{gathered} 
\mathsf{P}(m) \mathsf{P}(n) = \mathsf{P}\left(\mathsf{P}(m)n \right) + \mathsf{P}\left(\mathsf{P}(n) m \right) 
 \end{gathered}\end{equation}
\end{enumerate}
\end{definition}

In the special case that the algebra is seen as a module over itself, an algebra $A$ equipped with a derivation ${\mathsf{D}: A \to A}$ is called a \textbf{differential algebra} (of weight $0$) \cite{kaplansky1957introduction}, while an algebra $A$ equipped with an integration ${\mathsf{P}: A \to A}$ is called a \textbf{Rota-Baxter algebra} (of weight $0$) \cite{guo2012introduction}. 

As mentioned above, an FTC-pair consists of a derivation and an integration which are compatible via analogues of the two Fundamental Theorems of Calculus. The first Fundamental Theorem of Calculus essentially states that the integral is an antiderivative, or in other words, that the derivative of the integral gives us back the starting function: so if $g(x) = \int\limits^u_a f(t) \mathsf{d}t$ then $g^\prime(x) = f(x)$. As such, for a derivation and an integration, the analogue of the first Fundamental Theorem of Calculus is saying that the integration is a section of the derivation (or equivalently, that the derivation is a retraction of the integration). On the other hand, the second Fundamental Theorem of Calculus states that the integral of a derivative is equal to the difference of the starting function evaluated at the bounds of the integral: $\int \limits^x_a f^\prime(t) \mathsf{d}t = f(x) - f(a)$. So, for a derivation and an integration, the analogue of the second Fundamental Theorem of Calculus is that the derivation is almost a section of the integration, but up to a constant. In this case, by a constant we mean an element in the kernel of the derivation. We also ask that this constant be compatible with the multiplication.  

\begin{definition}\label{def:FTCpair}  Let $k$ be a commutative ring, $A$ a commutative $k$-algebra, and $M$ an $A$-module. 
\begin{enumerate}[{\em (i)}] 
\item A derivation $\mathsf{D}: A \to M$ and an integration $\mathsf{P}: M \to A$ satisfy the \textbf{First Fundamental Theorem of Calculus (FTC1)} \cite[Def 6.4]{bagnol2016shuffle} if $\mathsf{D} \circ \mathsf{P} = \mathsf{id}_M$, that is, the following equality holds for all $m \in M$:  
\begin{equation}\label{FTC1}\begin{gathered} 
\mathsf{D}\left( \mathsf{P}(m) \right) = m  
 \end{gathered}\end{equation}
 \item  A derivation $\mathsf{D}: A \to M$ and an integration $\mathsf{P}: M \to A$ satisfy the \textbf{Second Fundamental Theorem of Calculus (FTC2)} if for every $a\in A$, there exists a \textbf{$\mathsf{D}$-constant}, that is, a $c_a \in \mathsf{ker}\left( \mathsf{D} \right)$, such that the following equality holds for all $a \in A$: 
\begin{equation}\label{FTC2}\begin{gathered} 
\mathsf{P}\left( \mathsf{D}(a) \right) = a - c_a
 \end{gathered}\end{equation}
 and also that this $\mathsf{D}$-constant is compatible with multiplication, that is, $c_{ab} = c_a c_b$. 
\end{enumerate}
An \textbf{FTC-pair} is a pair $(\mathsf{D}: A \to M, \mathsf{P}: M \to A)$, which we will write as $\xymatrix{ A \ar@/^/[r]^-{\mathsf{D}}  &  \ar@/^/[l]^-{\mathsf{P}}  M
  }$, consisting of a derivation $\mathsf{D}: A \to M$ and an integration $\mathsf{P}: M \to A$ which satisfy both FTC1 and FTC2.
\end{definition}

Examples of FTC-pairs can be found below in Ex \ref{ex:FTC-pairs}. In Ex \ref{ex:FTC1/2-sept}, we also give separating examples which show that FTC1 and FTC2 are indeed independent. 

In the special case of $A=M$, an algebra $A$ equipped with a derivation ${\mathsf{D}: A \to A}$ and an integration $\mathsf{P}: A \to A$ which satisfy FTC1 is called a \textbf{differential Rota-Baxter algebra} \cite{guo2008differential}. On the other hand, an algebra equipped with a derivation and an integration which satisfy FTC2 has, to the best of our knowledge, not been defined. Indeed, in algebra literature, FTC1 seems to be the axiom of interest, while in differential category literature, FTC2 is the more prevalent axiom \cite{cockett_lemay_2018, lemay2020convenient}. That said, an algebra $A$ equipped with a derivation ${\mathsf{D}: A \to A}$ and an integration ${\mathsf{P}: A \to A}$ which satisfy both FTC1 and FTC2 is precisely an \textbf{integro-differential algebra} \cite{guo2014integro}. However, the definition of an integro-differential algebra is not given in terms of FTC1 and FTC2, but instead in terms of FTC1 and an axiom called the \textit{hybrid Rota-Baxter rule}. Thus, some explanation is needed to show that an FTC-pair of the form $\xymatrix{ A \ar@/^/[r]^-{\mathsf{D}}  &  \ar@/^/[l]^-{\mathsf{P}}  A
  }$ is precisely an integro-differential algebra. Let us first show that FTC2 (without assuming FTC1) can be equivalently given in terms of the hybrid Rota-Baxter rule or in terms of an algebra morphism. 

\begin{proposition}\label{prop:FTC2} Let $\mathsf{D}: A \to M$ be a derivation and $\mathsf{P}: M \to A$ be an integration. Then the following are equivalent: 
\begin{enumerate}[{\em (i)}]
\item $\mathsf{D}$ and $\mathsf{P}$ satisfy FTC2; 
\item The map $\mathsf{E}: A \to A$, defined as $\mathsf{E}(a) \colon \!\!\!= a - \mathsf{P}\left( \mathsf{D}(a) \right)$, is an idempotent $k$-algebra morphism such that $\mathsf{D} \circ \mathsf{E} =0$; 
\item $\mathsf{D}$ and $\mathsf{P}$ satisfy that $\mathsf{D} \circ \mathsf{P} \circ \mathsf{D} = \mathsf{D}$, and also the \textbf{hybrid Rota-Baxter rule} \cite[Def 2.1]{guo2014integro}, that is, the following equality holds: 
\begin{equation}\label{hybird-Rota-Baxter}\begin{gathered} 
\mathsf{P}\left( \mathsf{D}(a) \right)\mathsf{P}\left( \mathsf{D}(b) \right) + \mathsf{P}\left( \mathsf{D}(ab) \right) = a ~\mathsf{P}\left( \mathsf{D}(b) \right) + b~\mathsf{P}\left( \mathsf{D}(a) \right)
 \end{gathered}\end{equation}
for all $a,b \in A$. 
\end{enumerate}
\end{proposition}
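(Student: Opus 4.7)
The plan is to prove the cycle (i) $\Rightarrow$ (ii) $\Rightarrow$ (iii) $\Rightarrow$ (i). Throughout I will use two basic observations: that any derivation annihilates the unit, i.e., $\mathsf{D}(1)=0$ (from the Leibniz rule applied to $1=1\cdot 1$), and that the $\mathsf{D}$-constant $c_a$ in FTC2, should it exist, is forced to equal $a-\mathsf{P}(\mathsf{D}(a))$, so $c_a = \mathsf{E}(a)$ by definition. In particular $c_a$ is unique and depends $k$-linearly on $a$.

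For (i) $\Rightarrow$ (ii): assuming FTC2, the identification $\mathsf{E}(a)=c_a\in\ker(\mathsf{D})$ immediately yields $\mathsf{D}\circ\mathsf{E}=0$. Idempotency follows because $\mathsf{E}(\mathsf{E}(a))=\mathsf{E}(a)-\mathsf{P}(\mathsf{D}(\mathsf{E}(a)))=\mathsf{E}(a)-\mathsf{P}(0)=\mathsf{E}(a)$. The map is $k$-linear (as $\mathsf{P}$ and $\mathsf{D}$ are), preserves multiplication by the assumed compatibility $c_{ab}=c_ac_b$, and preserves the unit since $\mathsf{D}(1)=0$ gives $\mathsf{E}(1)=1$.

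For (ii) $\Rightarrow$ (iii): the condition $\mathsf{D}\circ\mathsf{E}=0$ unfolds directly to $\mathsf{D}(a)=\mathsf{D}(\mathsf{P}(\mathsf{D}(a)))$, i.e., $\mathsf{D}\circ\mathsf{P}\circ\mathsf{D}=\mathsf{D}$. For the hybrid Rota-Baxter rule, expand $\mathsf{E}(a)\mathsf{E}(b)=\mathsf{E}(ab)$ with $\mathsf{E}(x)=x-\mathsf{P}(\mathsf{D}(x))$ and simplify; after cancelling the common $ab$ term one obtains exactly equation \eqref{hybird-Rota-Baxter}. For (iii) $\Rightarrow$ (ii): reversing this expansion shows that the hybrid Rota-Baxter rule is equivalent to multiplicativity $\mathsf{E}(ab)=\mathsf{E}(a)\mathsf{E}(b)$; unit preservation again uses $\mathsf{D}(1)=0$; the equation $\mathsf{D}\circ\mathsf{P}\circ\mathsf{D}=\mathsf{D}$ yields $\mathsf{D}\circ\mathsf{E}=0$ and hence idempotency as in the first step. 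Finally, (ii) $\Rightarrow$ (i) is just the observation that setting $c_a:=\mathsf{E}(a)$ puts FTC2 in force: $\mathsf{P}(\mathsf{D}(a))=a-c_a$ by definition, $c_a\in\ker(\mathsf{D})$ because $\mathsf{D}\circ\mathsf{E}=0$, and $c_{ab}=c_ac_b$ because $\mathsf{E}$ is an algebra morphism.

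No step is technically hard here; the entire result is a bookkeeping exercise linking three equivalent reformulations of the same equation. The most delicate point is the (ii)$\leftrightarrow$(iii) equivalence of multiplicativity of $\mathsf{E}$ with the hybrid Rota-Baxter rule, which requires the straight expansion of $(a-\mathsf{P}(\mathsf{D}(a)))(b-\mathsf{P}(\mathsf{D}(b)))$ and careful matching of signs; this is where I would present the only displayed computation, and the rest of the proof reduces to short one-line verifications.
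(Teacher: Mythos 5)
Your proposal is correct and follows essentially the same route as the paper: the same identification $c_a=\mathsf{E}(a)$, the same expansion of $\mathsf{E}(a)\mathsf{E}(b)=\mathsf{E}(ab)$ to pass between multiplicativity and the hybrid Rota--Baxter rule, and the same use of $\mathsf{D}\circ\mathsf{P}\circ\mathsf{D}=\mathsf{D}$ for $\mathsf{D}\circ\mathsf{E}=0$. The only cosmetic difference is that you close the cycle by routing $(iii)\Rightarrow(i)$ through $(ii)$ rather than directly, which changes nothing of substance.
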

\begin{proof} For $(i) \Rightarrow (ii)$, first note that by construction, $\mathsf{E}$ is indeed a $k$-linear morphism. Moreover, by definition we have that $\mathsf{P}\left( \mathsf{D}(a) \right) = a - \mathsf{E}(a)$. However by FTC2, we also have that $\mathsf{P}\left( \mathsf{D}(a) \right) = a -c_a$, where $c_a \in \mathsf{ker}\left( \mathsf{D} \right)$. So we get that $\mathsf{E}(a) = c_a$, and thus $\mathsf{D} \circ \mathsf{E} =0$. Now, to explain why $\mathsf{E}$ preserves the multiplication and unit. First, recall that for any derivation, the Leibniz rule implies that $\mathsf{D}(1) =0$. Then by FTC2 we get that $0 = 1 - \mathsf{E}(1)$, and so $\mathsf{E}(1)=1$. Next, by the FTC2 assumption, we have that $\mathsf{E}(ab) = c_{ab} = c_a c_b = \mathsf{E}(a) \mathsf{E}(b)$, so $\mathsf{E}$ preserves the multiplication. So we indeed have that $\mathsf{E}$ is a $k$-algebra morphism. Lastly, we need to show that $\mathsf{E}$ is idempotent. Since $\mathsf{D}\left( \mathsf{E}(a) \right) = 0$, by FTC2 we get that $0 = \mathsf{E}(a) - \mathsf{E}\left( \mathsf{E}(a) \right)$, and thus $\mathsf{E}\left( \mathsf{E}(a) \right)= \mathsf{E}(a)$. 

For $(ii) \Rightarrow (iii)$, we first show the hybrid Rota-Baxter rule. Since $\mathsf{E}$ is a $k$-algebra morphism, it in particular preserves the multiplication, so $\mathsf{E}(ab) = \mathsf{E}(a)\mathsf{E}(b)$, or concretely $ab - \mathsf{P}\left( \mathsf{D}(ab) \right)= \left( a -\mathsf{P}\left( \mathsf{D}(a) \right) \right)\left( b -\mathsf{P}\left( \mathsf{D}(b) \right) \right)$. Expanding out the left-hand side gives us that:
\[ ab - \mathsf{P}\left( \mathsf{D}(ab) \right) = ab - b~\mathsf{P}\left( \mathsf{D}(a) \right) - a ~\mathsf{P}\left( \mathsf{D}(b) \right) + \mathsf{P}\left( \mathsf{D}(a) \right)\mathsf{P}\left( \mathsf{D}(b) \right) \]
Subtracting $ab$ from both sides and rearranging, we get that: 
\[ \mathsf{P}\left( \mathsf{D}(a) \right)\mathsf{P}\left( \mathsf{D}(b) \right) + \mathsf{P}\left( \mathsf{D}(ab) \right) = a ~\mathsf{P}\left( \mathsf{D}(b) \right) + b~\mathsf{P}\left( \mathsf{D}(a) \right) \]
as desired. For the other identity, by the assumption that $\mathsf{D} \circ \mathsf{E} =0$, we get that $0 = \mathsf{D}(a) - \mathsf{D}\left( \mathsf{P}\left( \mathsf{D}(a) \right) \right)$, and thus $\mathsf{D}\left( \mathsf{P}\left( \mathsf{D}(a) \right) \right)= \mathsf{D}(a)$ as desired. 

For $(iii) \Rightarrow (i)$, for each $a \in A$, define $c_a \in A$ as $c_a \colon \!\!\!= a - \mathsf{P}\left( \mathsf{D}(a) \right)$. By construction, we clearly have that $\mathsf{P}\left( \mathsf{D}(a) \right) = a - c_a$. We must also explain why $c_a$ is a $\mathsf{D}$-constant. By the assumption that $\mathsf{D} \circ \mathsf{P} \circ \mathsf{D} = \mathsf{D}$, we get that $\mathsf{D}(c_a) = \mathsf{D}(a) - \mathsf{D}\left( \mathsf{P}\left( \mathsf{D}(a) \right) \right) =0$, so $c_a \in \mathsf{ker}\left( \mathsf{D} \right)$ as desired. Now, using the hybrid Rota-Baxter rule, we can show that $c_{ab}$ is equal to $c_a c_b$. So we compute: 
\begin{align*}
c_{ab} = ab - \mathsf{P}\left( \mathsf{D}(ab) \right) = ab - b~\mathsf{P}\left( \mathsf{D}(a) \right) - a ~\mathsf{P}\left( \mathsf{D}(b) \right) + \mathsf{P}\left( \mathsf{D}(a) \right)\mathsf{P}\left( \mathsf{D}(b) \right) =  \left( a -\mathsf{P}\left( \mathsf{D}(a) \right) \right)\left( b -\mathsf{P}\left( \mathsf{D}(b) \right) \right) = c_a c_b
\end{align*}
So we conclude that $\mathsf{D}$ and $\mathsf{P}$ satisfy FTC2. 
\end{proof}

We now extend the above proposition by assuming FTC1. Moreover, in the definition of an integro-differential algebra \cite[Def 2.1]{guo2014integro}, it is not necessary to assume that the ``integro'' part is an integration, since from the hybrid Rota-Baxter rule we can prove the Rota-Baxter rule. 

\begin{proposition}\label{prop:FTC-integro} Let $\mathsf{D}: A \to M$ be a derivation and $\mathsf{P}: M \to A$ a $k$-linear map. Then the following are equivalent: 
\begin{enumerate}[{\em (i)}]
\item $\xymatrix{ A \ar@/^/[r]^-{\mathsf{D}}  &  \ar@/^/[l]^-{\mathsf{P}}  M
  }$ is an FTC-pair (so in particular, $\mathsf{P}$ is an integration);
\item $\mathsf{D}$ and $\mathsf{P}$ satisfy $\mathsf{D} \circ \mathsf{P} = \mathsf{id}_M$ and the hybrid Rota-Baxter rule. 
\end{enumerate}
\end{proposition}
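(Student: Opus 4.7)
The plan is to leverage Proposition \ref{prop:FTC2} to minimize new work, so that the only genuinely new computation is deriving the Rota-Baxter rule (equation \eqref{Rota-Baxter}) from FTC1 and the hybrid Rota-Baxter rule.

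For $(i) \Rightarrow (ii)$, essentially nothing needs to be done: FTC1 is part of the definition of an FTC-pair, and since $\mathsf{D}$ is a derivation and $\mathsf{P}$ is an integration satisfying FTC2, Proposition \ref{prop:FTC2} $((i) \Rightarrow (iii))$ delivers the hybrid Rota-Baxter rule.

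For $(ii) \Rightarrow (i)$, I would first observe that from FTC1 we immediately get $\mathsf{D} \circ \mathsf{P} \circ \mathsf{D} = \mathsf{D}$, so together with the hybrid Rota-Baxter rule hypothesis, Proposition \ref{prop:FTC2} $((iii) \Rightarrow (i))$ gives FTC2 provided we know $\mathsf{P}$ is actually an integration. Thus the one step not covered by the earlier proposition is showing that $\mathsf{P}$ satisfies the Rota-Baxter rule. This is the main obstacle, and the natural approach is to feed elements of the form $a = \mathsf{P}(m)$ and $b = \mathsf{P}(n)$ into the hybrid Rota-Baxter rule. On the left-hand side, FTC1 collapses $\mathsf{P}(\mathsf{D}(\mathsf{P}(m)))\mathsf{P}(\mathsf{D}(\mathsf{P}(n)))$ to $\mathsf{P}(m)\mathsf{P}(n)$, while on the right-hand side the two symmetric terms $a\,\mathsf{P}(\mathsf{D}(b)) + b\,\mathsf{P}(\mathsf{D}(a))$ become $\mathsf{P}(m)\mathsf{P}(n) + \mathsf{P}(n)\mathsf{P}(m) = 2\mathsf{P}(m)\mathsf{P}(n)$ using commutativity of $A$. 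Cancelling one copy of $\mathsf{P}(m)\mathsf{P}(n)$ yields $\mathsf{P}(\mathsf{D}(\mathsf{P}(m)\mathsf{P}(n))) = \mathsf{P}(m)\mathsf{P}(n)$.

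The final step is then to apply the Leibniz rule to $\mathsf{D}(\mathsf{P}(m)\mathsf{P}(n))$, which gives $\mathsf{P}(m)\mathsf{D}(\mathsf{P}(n)) + \mathsf{P}(n)\mathsf{D}(\mathsf{P}(m)) = \mathsf{P}(m)n + \mathsf{P}(n)m$ by FTC1. Applying $\mathsf{P}$ (which is $k$-linear) and substituting into the identity from the previous paragraph yields exactly
\[ \mathsf{P}(m)\mathsf{P}(n) = \mathsf{P}(\mathsf{P}(m)n) + \mathsf{P}(\mathsf{P}(n)m), \]
which is the Rota-Baxter rule \eqref{Rota-Baxter}. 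This establishes that $\mathsf{P}$ is an integration, and combined with the invocation of Proposition \ref{prop:FTC2} described above, gives FTC2, completing the proof that $\xymatrix{ A \ar@/^/[r]^-{\mathsf{D}} & \ar@/^/[l]^-{\mathsf{P}} M }$ is an FTC-pair.
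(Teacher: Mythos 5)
Your proposal is correct and follows essentially the same architecture as the paper's proof: reduce everything to Proposition \ref{prop:FTC2} except the Rota-Baxter rule, derive the key identity $\mathsf{P}\left( \mathsf{D}\left( \mathsf{P}(m)\mathsf{P}(n) \right) \right) = \mathsf{P}(m)\mathsf{P}(n)$ from the hybrid Rota-Baxter rule and FTC1, and combine it with the Leibniz-rule computation $\mathsf{D}\left( \mathsf{P}(m)\mathsf{P}(n) \right) = \mathsf{P}(m)n + \mathsf{P}(n)m$. The only (harmless) difference is that you obtain that key identity by substituting $a = \mathsf{P}(m)$, $b = \mathsf{P}(n)$ directly into the hybrid rule and cancelling, whereas the paper first shows that $\mathsf{E}(a) = a - \mathsf{P}\left( \mathsf{D}(a) \right)$ is multiplicative and then uses $\mathsf{E} \circ \mathsf{P} = 0$.
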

\begin{proof} For the $(i) \Rightarrow (ii)$ direction, by definition  $\mathsf{D}$ and $\mathsf{P}$ satisfy FTC1, so $\mathsf{D} \circ \mathsf{P} = \mathsf{id}_M$ holds, and also satisfy FTC2, so by Prop \ref{prop:FTC2} they also satisfy the hybrid Rota-Baxter rule. 

For the $(ii) \Rightarrow (i)$ direction, we first use the hybrid Rota-Baxter rule to prove that the map $\mathsf{E}: A \to A$, defined as $\mathsf{E}(a) \colon \!\!\!= a - \mathsf{P}\left( \mathsf{D}(a) \right)$, preserves the multiplication. So we compute: 
\begin{gather*}
\mathsf{E}(ab) = ab - \mathsf{P}\left( \mathsf{D}(ab) \right) = ab - b~\mathsf{P}\left( \mathsf{D}(a) \right) - a ~\mathsf{P}\left( \mathsf{D}(b) \right) + \mathsf{P}\left( \mathsf{D}(a) \right)\mathsf{P}\left( \mathsf{D}(b) \right)\\
 =  \left( a -\mathsf{P}\left( \mathsf{D}(a) \right) \right)\left( b -\mathsf{P}\left( \mathsf{D}(b) \right) \right) = \mathsf{E}(a)\mathsf{E}(b)
\end{gather*}
So $\mathsf{E}(ab) = \mathsf{E}(a)\mathsf{E}(b)$. We will use this fact to show that $\mathsf{P}$ is an integration. First observe that from $\mathsf{D} \circ \mathsf{P} = \mathsf{id}_M$ and the Leibniz rule, we obtain that:
\begin{align*}
\mathsf{D}\left( \mathsf{P}(m) \mathsf{P}(n) \right) = \mathsf{P}(m) \mathsf{D}\left( \mathsf{P}(n) \right)   + \mathsf{P}(n) \mathsf{D}\left( \mathsf{P}(m) \right) = \mathsf{P}(m) n + \mathsf{P}(n) m 
\end{align*}
So $\mathsf{D}\left( \mathsf{P}(m) \mathsf{P}(n) \right) = \mathsf{P}(m) n + \mathsf{P}(n) m$. Also note that from $\mathsf{D} \circ \mathsf{P} = \mathsf{id}_M$, it easily follows that $\mathsf{E} \circ \mathsf{P} =0$. Then applying $\mathsf{P}$ to the previous equality, using $\mathsf{P}\left( \mathsf{D}(a) \right) = a - \mathsf{E}(a)$ and also that $\mathsf{E}$ preserves the multiplication, we compute that: 
\begin{gather*}
\mathsf{P}\left( \mathsf{P}(m) n \right) + \mathsf{P}\left( \mathsf{P}(n) m \right) =  \mathsf{P}\left( \mathsf{D}\left( \mathsf{P}(m) \mathsf{P}(n) \right) \right) = \mathsf{P}(m) \mathsf{P}(n) - \mathsf{E}\left( \mathsf{P}(m) \mathsf{P}(n) \right) = \\
\mathsf{P}(m) \mathsf{P}(n) -  \underbrace{\mathsf{E}\left( \mathsf{P}(m) \right)}_{=0} \underbrace{\mathsf{E}\left( \mathsf{P}(n) \right)}_{=0} =  \mathsf{P}(m) \mathsf{P}(n) 
\end{gather*}
So $\mathsf{P}$ satisfies the Rota-Baxter rule and is therefore an integration. By assumption, $\mathsf{D}$ and $\mathsf{P}$ satisfy FTC1, so it remains to explain why they also satisfy FTC2. However, by assumption $\mathsf{D}$ and $\mathsf{P}$ also satisfy the hybrid Rota-Baxter rule, and from FTC1 it follows that $\mathsf{D} \circ \mathsf{P} \circ \mathsf{D} = \mathsf{D}$. Thus by Prop \ref{prop:FTC2}, we have that $\mathsf{D}$ and $\mathsf{P}$ satisfy FTC2. So we conclude that $\xymatrix{ A \ar@/^/[r]^-{\mathsf{D}}  &  \ar@/^/[l]^-{\mathsf{P}}  M
  }$ is an FTC-pair. 
\end{proof}

Now an (commutative) integro-differential algebra (of weight $0$) \cite[Def 2.1]{guo2014integro} is a differential algebra $A$ with a derivation $\mathsf{D}: A \to A$ equipped with a linear map $\mathsf{P}: A \to A$ such that FTC1 and the hybrid Rota-Baxter rule hold. Thus by Prop \ref{prop:FTC-integro}, every integro-differential algebra is an FTC-pair, or more precisely: an integro-differential algebra is precisely the same thing as FTC-pair in the special case of an algebra seen as a module over itself. Therefore, this justifies our claim that an FTC-pair is indeed the ``module generalization'' of an integro-differential algebra. 

Before we give examples of FTC-pairs, let us quickly consider another case of interest, which is when the constants are precisely the base ring. For a commutative $k$-algebra $A$, let $\mathsf{u}: k \to A$ be the $k$-algebra structure morphism. Recall that for a derivation $\mathsf{D}: A \to M$, we have that $\mathsf{D} \circ \mathsf{u} = 0$. As such, $\mathsf{u}$ factors through the kernel of the derivation, so let $\hat{\mathsf{u}}: k \to \mathsf{ker}(\mathsf{D})$ be the canonical $k$-algebra morphism. 

\begin{definition}\label{def:aFTC} An FTC-pair $\xymatrix{ A \ar@/^/[r]^-{\mathsf{D}}  &  \ar@/^/[l]^-{\mathsf{P}}  M
  }$ is \textbf{augmented} if $\hat{\mathsf{u}}: k \to \mathsf{ker}(\mathsf{D})$ is an isomorphism, so $\mathsf{ker}(\mathsf{D}) \cong k$. 
\end{definition}

The name is justified by the fact that the algebra of an augmented FTC-pair is indeed an augmented algebra. 

\begin{lemma}  An FTC-pair $\xymatrix{ A \ar@/^/[r]^-{\mathsf{D}}  &  \ar@/^/[l]^-{\mathsf{P}}  M
  }$ is augmented if and only if $A$ has an augmentation $\mathsf{e}: A \to k$ such that the following equality holds for all $a \in A$: 
    \begin{equation}\label{def:aug-FTC2}\begin{gathered} 
\mathsf{u}\left( \mathsf{e}(a) \right) = a -\mathsf{P}\left( \mathsf{D}(a) \right) 
 \end{gathered}\end{equation}
\end{lemma}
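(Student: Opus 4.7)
My plan is to leverage Proposition \ref{prop:FTC2}, which tells us that the map $\mathsf{E}: A \to A$ defined by $\mathsf{E}(a) = a - \mathsf{P}(\mathsf{D}(a))$ is an idempotent $k$-algebra morphism satisfying $\mathsf{D} \circ \mathsf{E} = 0$. The latter condition means that $\mathsf{E}$ factors through $\mathsf{ker}(\mathsf{D})$, so there is a unique $k$-algebra morphism $\widetilde{\mathsf{E}}: A \to \mathsf{ker}(\mathsf{D})$ with $\mathsf{E}(a) = \widetilde{\mathsf{E}}(a)$ (viewed inside $A$). This is the bridge connecting the abstract condition on $\mathsf{ker}(\mathsf{D})$ to the concrete augmentation formula.

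For the $(\Rightarrow)$ direction, suppose the FTC-pair is augmented, so $\hat{\mathsf{u}}: k \to \mathsf{ker}(\mathsf{D})$ is an isomorphism. I would define $\mathsf{e} := \hat{\mathsf{u}}^{-1} \circ \widetilde{\mathsf{E}}: A \to k$. As a composition of $k$-algebra morphisms, $\mathsf{e}$ is a $k$-algebra morphism. To verify that $\mathsf{e}$ is an augmentation, I need $\mathsf{e} \circ \mathsf{u} = \mathsf{id}_k$, which follows from the fact that $k$-algebra morphisms preserve the unit (so $\mathsf{e}(\mathsf{u}(1))=1$) together with $k$-linearity. Finally, chasing the definitions gives $\mathsf{u}(\mathsf{e}(a)) = \widetilde{\mathsf{E}}(a) = \mathsf{E}(a) = a - \mathsf{P}(\mathsf{D}(a))$, which is precisely (\ref{def:aug-FTC2}).

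For the $(\Leftarrow)$ direction, suppose $A$ comes equipped with an augmentation $\mathsf{e}: A \to k$ satisfying (\ref{def:aug-FTC2}). I need to show that $\hat{\mathsf{u}}: k \to \mathsf{ker}(\mathsf{D})$ is an isomorphism. Injectivity is immediate: $\mathsf{e} \circ \mathsf{u} = \mathsf{id}_k$ gives $\mathsf{u}$ a left inverse, hence $\mathsf{u}$ and therefore $\hat{\mathsf{u}}$ are injective. For surjectivity, take any $a \in \mathsf{ker}(\mathsf{D})$; then by (\ref{def:aug-FTC2}) we have $\mathsf{u}(\mathsf{e}(a)) = a - \mathsf{P}(\mathsf{D}(a)) = a - \mathsf{P}(0) = a$, so $a$ lies in the image of $\mathsf{u}$, and since it also lies in $\mathsf{ker}(\mathsf{D})$, it lies in the image of $\hat{\mathsf{u}}$.

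The only subtle point is keeping track of what lives where: $\mathsf{E}$ lands in $A$, $\widetilde{\mathsf{E}}$ lands in $\mathsf{ker}(\mathsf{D})$, $\hat{\mathsf{u}}$ goes into $\mathsf{ker}(\mathsf{D})$, and $\mathsf{u}$ goes into $A$. Everything else is a direct bookkeeping argument with no real obstacle, since Proposition \ref{prop:FTC2} has already done the heavy lifting of showing that $\mathsf{E}$ is a $k$-algebra morphism.
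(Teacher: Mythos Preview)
Your proof is correct and follows essentially the same approach as the paper: both directions define $\mathsf{e} = \hat{\mathsf{u}}^{-1} \circ \widetilde{\mathsf{E}}$ in the forward direction and use (\ref{def:aug-FTC2}) on elements of $\mathsf{ker}(\mathsf{D})$ in the backward direction. The only cosmetic difference is that the paper explicitly exhibits the inverse of $\hat{\mathsf{u}}$ as the restriction of $\mathsf{e}$, whereas you argue injectivity and surjectivity directly; these are equivalent.
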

\begin{proof} For the $\Rightarrow$ direction, by FTC2, we have a $k$-algebra morpshim $\hat{\mathsf{E}}: A \to \mathsf{ker}(\mathsf{D})$ defined as ${\hat{\mathsf{E}}(a) \colon \!\!\!= a - \mathsf{P}\left( \mathsf{D}(a) \right)}$. Then define $\mathsf{e}: A \to k$ as the composite $\mathsf{e} \colon \!\!\!= \hat{\mathsf{u}}^{-1} \circ \hat{\mathsf{E}}$. Since $\mathsf{e}$ is the composite of $k$-algebra morphisms, it is itself a $k$-algebra morphism. Therefore, $\mathsf{e}$ is an augmentation on $A$. Now note that for every $c \in \mathsf{ker}(\mathsf{D})$ that $\mathsf{u}\left( \hat{\mathsf{u}}^{-1}(c) \right) = c$. Therefore for every $a \in A$ we have that $\mathsf{u}\left( \mathsf{e}(a) \right) = \hat{\mathsf{E}}(a) = a - \mathsf{P}\left( \mathsf{D}(a) \right)$, and so (\ref{def:aug-FTC2}) holds as desired. 

For the $\Leftarrow$ direction, define $\hat{\mathsf{u}}^{-1}: \mathsf{ker}(\mathsf{D}) \to k$ as the restriction of the augmentation on the constants, that is, $\hat{\mathsf{u}}^{-1}(c) = \mathsf{e}(c)$. Since $\mathsf{e}$ is an augmentation, we have that $\mathsf{e} \circ \mathsf{u} = \mathsf{id}_k$, and so we get that $\hat{\mathsf{u}}^{-1} \circ \hat{\mathsf{u}} = \mathsf{id}_k$. On the other hand, (\ref{def:aug-FTC2}) tells us that $\mathsf{u}\left( \mathsf{e}(c) \right) = c$ for all $c \in \mathsf{ker}(\mathsf{D})$. Therefore, we have that $\hat{\mathsf{u}} \circ \hat{\mathsf{u}}^{-1} = \mathsf{id}_{\mathsf{ker}(\mathsf{D})}$. So, we conclude that our FTC-pair is indeed augmented. 
\hfill \end{proof}

It is important to highlight augmented FTC-pairs since, as we will see in Sec \ref{sec:equivalence}, they correspond to Zinbiel algebras over the base ring. 

\begin{example} \normalfont \label{ex:FTC-pairs} Here are some examples of FTC-pairs. 
\begin{enumerate}[{\em (i)}]
\item\label{ex:FTC-poly} Let $k$ be a field of characteristic zero, and let $k[x]$ be the polynomial ring over $k$. Then $\xymatrix{ k[x] \ar@/^/[r]^-{\mathsf{D}}  &  \ar@/^/[l]^-{\mathsf{P}}  k[x] 
  }$ is an augmented FTC-pair via the standard differentiation and integration of polynomials:
  \begin{align*}\mathsf{D}(x^n) = n x^{n-1} && \mathsf{P}(x^n) = \frac{1}{n+1}x^{n+1}
  \end{align*}
  This also makes $k[x]$ an integro-differential algebra. 
\item \label{ex:FTC-smooth} Let $\mathbb{R}$ be the reals and $\mathcal{C}^\infty(\mathbb{R})$ be the $\mathbb{R}$-algebra of smooth functions $\mathbb{R} \to \mathbb{R}$. Then $\xymatrix{ \mathcal{C}^\infty(\mathbb{R}) \ar@/^/[r]^-{\mathsf{D}}  &  \ar@/^/[l]^-{\mathsf{P}}  \mathcal{C}^\infty(\mathbb{R}) 
  }$ is an augmented FTC-pair via the standard differentiation and integration of smooth functions:
    \begin{align*}\mathsf{D}(f)(x) = f^\prime(x) && \mathsf{P}(f)(x) = \int \limits^x_0 f(t) \mathsf{d}t
  \end{align*}
    This also makes $\mathcal{C}^\infty(\mathbb{R})$ an integro-differential algebra \cite[Ex 2.2.(a)]{guo2014integro}. 
\item \label{ex:FTC-Hur} Here is an example of an FTC-pair which is not necessarily augmented and arises from an important example of an integro-differential algebra called the \textbf{Hurwitz series} (of weight $0$) algebra over an algebra \cite[Prop 3.2]{guo2014integro}, which turns out to also be the cofree differential algebra over an algebra. So let $k$ be a commutative ring and $A$ a commutative $k$-algebra. Let $\mathsf{H}(A)$ be the $k$-module of lists of elements of $A$, or in other words, the $k$-module of functions $f: \mathbb{N} \to A$, so $\mathsf{H}(A) = A^\mathbb{N}$. Then $\mathsf{H}(A)$ is a commutative $k$-algebra with multiplicaiton given by the \textbf{Hurwitz product} which for $f,g \in \mathsf{H}(A)$, $fg: \mathbb{N} \to A$ is defined as $(fg)(n) = \sum \limits^n_{k=0} f(k) g(n-k)$. Then $\mathsf{H}(A)$ is an integro-differential algebra, and so $\xymatrix{ \mathsf{H}(A) \ar@/^/[r]^-{\mathsf{D}}  &  \ar@/^/[l]^-{\mathsf{P}}  \mathsf{H}(A) 
  }$ is an FTC-pair where: 
  \begin{align*}\mathsf{D}(f)(n) = f(n+1) && \mathsf{P}(f)(n) = \begin{cases} 0 & \text{if } n=0 \\
  f(n-1) & \text{if } n\geq 1
  \end{cases}
  \end{align*}
  Visually, writing a list as $f = (a_0, a_1, a_2, \hdots)$, the derivation shifts the list to the left, $\mathsf{D}(a_0, a_1, a_2, \hdots) = (a_1, a_2, \hdots)$, while the integration shifts the list to the right by inserting $0$, $\mathsf{P}(a_0, a_1, a_2, \hdots) = (0,a_0, a_1, a_2, \hdots)$. With this, we clearly see that the $\mathsf{D}$-constants correspond precisely to lists of the form $(a, 0, 0, \hdots)$, so $\mathsf{ker}\left( \mathsf{D} \right) \cong A$. So this FTC-pair is augmented if and only if $A \cong k$. Furthermore, this example also clearly shows that we do not necessarily need to work with rationals to obtain interesting examples of integrations. That said, when $k$ is a field with characteristic zero, then $\mathsf{H}(A)$ is isomorphic as a $k$-algebra to the power series algebra over $A$, which provides an FTC-pair based on differentiating and integrating power series. So, while we need rationals to integrate polynomials or power series, they are not necessary to integrate Hurwitz series.  
\item \label{ex:FTC-shuffle} Here is an example of an FTC-pair which is not an integro-differential algebra, and is this time inspired by free Rota-Baxter algebras \cite[Chap 3]{guo2012introduction}. Just as with constructing free Rota-Baxter algebras, this example uses the \textbf{shuffle algebra} \cite[Sec 3.1.1]{guo2012introduction}. So, let $k$ be a commutative ring. For a $k$-module $V$, the shuffle algebra over $V$ is the commutative $k$-algebra $\mathsf{Sh}(V)$ defined as $\mathsf{Sh}(V) = \bigoplus \limits^\infty_{n=0} V^{\otimes^n} = k \oplus V \oplus (V \otimes V) \oplus \hdots$ with multiplication given by the \textbf{shuffle product} $\shuffle$ which is defined on pure tensors $\overline{v}= v_0 \otimes v_1 \otimes  \hdots \otimes v_n$ and $\overline{w}= w_0 \otimes w_1 \otimes \hdots \otimes w_m$ as follows: 
\begin{align*}
    1 \shuffle \overline{v} = \overline{v} = \overline{v} \shuffle 1 &&\overline{v} \shuffle \overline{w} = v_0 \otimes \left( (v_1 \otimes \hdots \otimes v_n) \shuffle \overline{w} \right) + w_0 \otimes \left( \overline{v} \shuffle (w_1 \otimes \hdots \otimes w_m) \right)
\end{align*}
Now consider the \emph{reduced} shuffle algebra $\mathsf{Sh}_+(V) = \bigoplus \limits^\infty_{n=1} V^{\otimes^n} = V \oplus (V \otimes V) \oplus \hdots$ \cite[Ex 5.2.(a)]{Loday2001}. While $\mathsf{Sh}_+(V)$ is a $\mathsf{Sh}(V)$-module as a subalgebra of $\mathsf{Sh}(V)$, there is another way in which it is a $\mathsf{Sh}(V)$-module. So define the module action $\triangleleft$ on pure tensors as follows: 
\begin{align*}
1 \triangleleft \overline{w} = \overline{w} &&  \overline{v} \triangleleft  \overline{w} = v_0 \otimes \left( (v_1 \otimes  \hdots \otimes v_n) \shuffle \overline{w} \right) 
\end{align*}
 Then $\xymatrix{ \mathsf{Sh}(V) \ar@/^/[r]^-{\mathsf{D}}  &  \ar@/^/[l]^-{\mathsf{P}}  \mathsf{Sh}_+(V) 
  }$ is an FTC-pair with the above $\mathsf{Sh}(V)$-module structure on $\mathsf{Sh}_+(V)$ and where the derivation projects out the degree $\geq 1$ terms, and the integration is the canonical injection of the reduced shuffle algebra into the shuffled algebra:
   \begin{align*}\mathsf{D}(1) =0 \qquad  \mathsf{D}\left( v_1 \otimes \hdots \otimes v_n \right) = v_1 \otimes \hdots \otimes v_n  &&  \mathsf{P}\left( v_1 \otimes \hdots \otimes v_n    \right) = v_1 \otimes \hdots \otimes v_n
  \end{align*}
That this is indeed an FTC-pair will be explained in Ex \ref{ex:FTC-shuffle2}. Moreover, in Ex \ref{ex:freeRB-FTC}, we will connect this FTC-pair to free Rota-Baxter algebras. 
\end{enumerate}
\end{example}

\begin{example} \normalfont \label{ex:FTC1/2-sept} Here are now some separating examples to show that FTC1 and FTC2 are indeed independent of each other. 
\begin{enumerate}[{\em (i)}]
\item Trivially, the zero maps $0: A \to M$ and $0: M \to A$ are respectively a derivation and an integration, and together satisfy FTC2. However, if $M$ is not the zero $A$-module, then they will not satisfy FTC1. 
\item An example of a differential Rota-Baxter algebra which is not an integro-differential algebra is given in \cite[Ex 2.2.(c)]{guo2014integro}. As such, this provides an example of a derivation and an integration which satisfy FTC1 but not FTC2. 
\end{enumerate}
\end{example}

We conclude this section by defining our category of (augmented) FTC-pairs, which in Sec \ref{sec:equivalence} we will show is equivalent to the category of Zinbiel algebras. 

\begin{definition}\label{def:FTC-category} For a commutative ring $k$, let $\mathsf{FTC}$ be the category where: 
\begin{enumerate}[{\em (i)}]
\item The objects of $\mathsf{FTC}$ are FTC-pairs $\xymatrix{ A \ar@/^/[r]^-{\mathsf{D}}  &  \ar@/^/[l]^-{\mathsf{P}}  M
  }$;
\item The maps of $\mathsf{FTC}$ are pairs $(f,g): \left( \xymatrix{ A \ar@/^/[r]^-{\mathsf{D}}  &  \ar@/^/[l]^-{\mathsf{P}}  M
  } \right) \to \left( \xymatrix{ C \ar@/^/[r]^-{\mathsf{B}}  &  \ar@/^/[l]^-{\mathsf{Q}}  N
  } \right)$ consisting of a $k$-algebra morphism ${f: A \to C}$ and an $A$-module morphism $g: M \to N$ in the sense that the following equality holds for all $a \in A$ and $m \in M$:
  \begin{equation}\label{gmodmap}\begin{gathered} 
g(am) = f(a) g(m)
 \end{gathered}\end{equation}
  and also that the following diagram commutes: 
 \begin{equation}\label{FTC-maps}\begin{gathered} 
\xymatrixcolsep{5pc} \xymatrix{ A \ar[d]_-{f} \ar@/^/[r]^-{\mathsf{D}}  &  \ar@/^/[l]^-{\mathsf{P}}  M \ar[d]^-{g} \\
 C \ar@/^/[r]^-{\mathsf{B}}  &  \ar@/^/[l]^-{\mathsf{Q}}  N
  }
 \end{gathered}\end{equation}
 Explicitly, the following equalities hold  for all $a \in A$ and $m \in M$:  
 \begin{align}\label{FTC-maps-equalities}
 g\left( \mathsf{D}(a) \right) = \mathsf{B}(f(a)) &&  f\left( \mathsf{P}(m) \right) =  \mathsf{Q}(g(m)) 
 \end{align}
\end{enumerate}
Composition in $\mathsf{FTC}$ is defined point-wise, that is, $(f,g) \circ (h,k) = (f \circ h, g \circ k)$, and identity maps in $\mathsf{FTC}$ are given by pairs of identity maps $(\mathsf{id}_A, \mathsf{id}_M)$. Also, let $\mathsf{FTC}_{aug}$ be the full subcategory of augmented FTC-pairs. 
\end{definition}

For our constructions in Sec \ref{sec:equivalence}, it will be useful to note what are isomorphisms of FTC-pairs, by which we mean isomorphisms in $\mathsf{FTC}$. It is straightforward to observe that an isomorphism of FTC-pairs then consists of an algebra isomorphism and a module isomorphism. We leave this as an exercise for the reader to check for themselves. 

\begin{lemma}\label{lem:FTC-iso} A map $(f,g) \in \mathsf{FTC}$ is an isomorphism if and only if $f$ and $g$ are isomorphisms. So in particular, if $(f,g) \in \mathsf{FTC}$, and both $f$ and $g$ are isomorphisms, then $(f^{-1}, g^{-1}) \in  \mathsf{FTC}$. 
\end{lemma}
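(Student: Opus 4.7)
The plan is to exploit the fact that both composition and identities in $\mathsf{FTC}$ are defined componentwise, so invertibility in $\mathsf{FTC}$ reduces to componentwise invertibility together with a verification that the candidate inverse still lives in $\mathsf{FTC}$.

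For the $(\Rightarrow)$ direction, I would suppose $(f,g)$ has a two-sided inverse $(h,k)$ in $\mathsf{FTC}$. Then from the pointwise definition of composition, the equalities $(f \circ h, g \circ k) = (\mathsf{id}_C, \mathsf{id}_N)$ and $(h \circ f, k \circ g) = (\mathsf{id}_A, \mathsf{id}_M)$ are immediate, so $f$ is an isomorphism of $k$-algebras with inverse $h$ and $g$ is an isomorphism of $k$-modules with inverse $k$.

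For the $(\Leftarrow)$ direction, I would start from $(f,g) \in \mathsf{FTC}$ in which $f$ and $g$ are each bijective (as an algebra morphism and a module morphism, respectively) and show that the pair $(f^{-1}, g^{-1})$ is itself a morphism in $\mathsf{FTC}$ in the reverse direction. Three identities need checking: the action compatibility (\ref{gmodmap}) for $g^{-1}$ over $f^{-1}$, and the two equalities in (\ref{FTC-maps-equalities}) relating $g^{-1}$ to $\mathsf{D},\mathsf{B}$ and $f^{-1}$ to $\mathsf{P},\mathsf{Q}$. Each is obtained by a routine substitution: in $g(am) = f(a)g(m)$ put $a = f^{-1}(c)$ and $m = g^{-1}(n)$ and apply $g^{-1}$ to both sides; in $g(\mathsf{D}(a)) = \mathsf{B}(f(a))$ set $a = f^{-1}(c)$ and apply $g^{-1}$; in $f(\mathsf{P}(m)) = \mathsf{Q}(g(m))$ set $m = g^{-1}(n)$ and apply $f^{-1}$. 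Once $(f^{-1}, g^{-1})$ is confirmed to lie in $\mathsf{FTC}$, the pointwise composition rule makes it a two-sided inverse to $(f,g)$.

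I do not expect any real obstacle; the only thing to keep track of is the difference in shape between the algebra-morphism condition on $f$ and the module-morphism condition on $g$. The statement is essentially a sanity check confirming that $\mathsf{FTC}$, being cut out by equational conditions inside what amounts to a product category, inherits the good behavior of isomorphisms from its components, which is why the author is content to leave it as an exercise.
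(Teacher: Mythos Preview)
Your proposal is correct and is exactly the routine componentwise verification the paper has in mind; indeed, the paper gives no proof at all and explicitly leaves this lemma as an exercise for the reader, so there is nothing further to compare.
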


\section{Zinbiel Algebras}

In this section, we briefly review Zinbiel algebras. For a more in-depth introduction to Zinbiel algebras, we refer the reader to \cite{alvarez2022algebraic,Loday2001,loday1995cup}. 

\begin{definition}\label{def:Zinbiel-algebra} Let $k$ be a commutative ring and $A$ a commutative $k$-algebra. An \textbf{$A$-Zinbiel algebra} \cite[Sec 7.1]{Loday2001} is an $A$-module $Z$ equipped with an $A$-bilinear binary operation $\triangleleft$, called the \textbf{Zinbiel operator}, which satisfies the \textbf{Zinbiel identity}, that is, the following equality holds for all $x,y,z \in Z$: 
\begin{equation}\label{def:Zinbiel}\begin{gathered} 
(x \triangleleft y) \triangleleft z = x \triangleleft (y \triangleleft z) + x \triangleleft (z \triangleleft y) 
 \end{gathered}\end{equation}
\end{definition}

Every Zinbiel algebra can be made into a commutative non-unital associative algebra. 

\begin{lemma}\label{lem:symprod} \cite[Prop 1.5]{loday1995cup} Let $Z$ be an $A$-Zinbiel algebra with Zinbiel operator $\triangleleft$. Then $Z$ is a commautive (non-unital) associative $A$-algebra with binary operation $\ast_\triangleleft$, called the \textbf{symmetrized product}, as follows for all $x,y \in Z$: 
\begin{equation}\label{def:symprod}\begin{gathered} 
x \ast_\triangleleft y = x \triangleleft y + y \triangleleft x 
\end{gathered}\end{equation}
\end{lemma}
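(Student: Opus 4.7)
The plan is to verify the three things needed: that $\ast_\triangleleft$ is $A$-bilinear, commutative, and associative. Bilinearity is immediate since $\triangleleft$ is $A$-bilinear by the definition of an $A$-Zinbiel algebra, and a sum of $A$-bilinear maps is $A$-bilinear. Commutativity is even more immediate: swapping $x$ and $y$ in $x \triangleleft y + y \triangleleft x$ yields the same expression, so $x \ast_\triangleleft y = y \ast_\triangleleft x$. So the substantive content lies entirely in associativity, which I expect to be the main (and only) real obstacle.

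To prove $(x \ast_\triangleleft y) \ast_\triangleleft z = x \ast_\triangleleft (y \ast_\triangleleft z)$, I would expand both sides in terms of $\triangleleft$ using the definition and then reduce each of the four ``outer'' Zinbiel products that appear (namely $(x \triangleleft y) \triangleleft z$, $(y \triangleleft x) \triangleleft z$, $(y \triangleleft z) \triangleleft x$, $(z \triangleleft y) \triangleleft x$) using the Zinbiel identity (\ref{def:Zinbiel}). Expanding the left-hand side gives
\begin{align*}
(x \ast_\triangleleft y) \ast_\triangleleft z &= (x \triangleleft y) \triangleleft z + (y \triangleleft x) \triangleleft z + z \triangleleft (x \triangleleft y) + z \triangleleft (y \triangleleft x),
\end{align*}
and applying the Zinbiel identity to the first two summands produces six terms in total. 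Expanding the right-hand side similarly gives four summands, two of which are already in the reduced form $x \triangleleft (y \triangleleft z)$ and $x \triangleleft (z \triangleleft y)$, while the other two, $(y \triangleleft z)\triangleleft x$ and $(z \triangleleft y)\triangleleft x$, reduce via the Zinbiel identity to a second set of six terms. A direct comparison should show that these two lists of six terms agree term-by-term.

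The only care needed is bookkeeping: writing the two lists out clearly, grouping them by which variable appears in the leftmost slot, and observing that each of the six cyclic/permutation patterns $u \triangleleft (v \triangleleft w)$ for $\{u,v,w\} = \{x,y,z\}$ appears exactly once on each side. I do not expect any subtlety beyond this, and the argument is essentially a direct expansion; it is exactly the classical verification from \cite{loday1995cup} that a Zinbiel algebra is a commutative associative algebra under its symmetrized product. Finally, I would remark that non-unitality is automatic since we have made no assumption providing a unit for $\triangleleft$.
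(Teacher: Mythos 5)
Your proof is correct. The paper gives no proof of this lemma, simply citing Loday's Prop 1.5, and your direct expansion --- both sides of the associativity equation reduce via the Zinbiel identity to the sum of all six terms $u \triangleleft (v \triangleleft w)$ over permutations of $\{x,y,z\}$ --- is exactly the classical verification being cited.
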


\begin{example}\normalfont \label{ex:Zin-shuffle} The canonical example of a Zinbiel algebra is the \emph{reduced} shuffle algebra. Moreover, it is, in fact, the free Zinbiel algebra over a module \cite[Prop  1.7]{loday1995cup}. In fact, we have already provided the Zinbiel operator for the reduced shuffle algebra, it is the module action described in Ex \ref{ex:FTC-pairs}.(\ref{ex:FTC-shuffle}). Concretely, let $k$ be a commutative ring and $V$ be a $k$-module, and consider its reduced shuffle algebra $\mathsf{Sh}_+(V)$. Then $\mathsf{Sh}_+(V)$ is a $k$-Zinbiel algebra where the Zinbiel operator\footnote{We are abusing notation slightly here and use $\triangleleft$ for both the Zinbiel operator on $\mathsf{Sh}_+(V)$ and the action of $\mathsf{Sh}(V)$ on $\mathsf{Sh}_+(V)$. However, $\mathsf{Sh}(V)$ is not a Zinbiel algebra with $\triangleleft$ since $1 \triangleleft 1$ is undefined.} $\triangleleft$ is defined as follows on pure tensors $\overline{v}= v_0 \otimes v_1 \otimes  \hdots \otimes v_n$ and $\overline{w}$ as follows: 
 \[ \overline{v} \triangleleft \overline{w} = v_0 \otimes \left( (v_1 \otimes  \hdots \otimes v_n) \shuffle \overline{w} \right)   \]
 Equivalently, the Zinbiel operator can be directly defined in terms of $(m,n)$-shuffles \cite[Sec 1.7]{loday1995cup} (which are permutations that preserve the order of the first $m$ terms and the last $n$ terms). Moreover, the induced symmetrized product is precisely the shuffle product, $\ast_\triangleleft = \shuffle$. 
\end{example}
    
Another important way of constructing Zinbiel algebras, which is of particular interest to the story of this paper, is from integrations. 

\begin{lemma}\label{lem:P-Zinbiel} \cite[Prop 2.10]{uchino2008quantum} Let $\mathsf{P}: A \to M$ be an integration. Then $M$ is a $k$-Zinbiel algebra with binary operation $\triangleleft_\mathsf{P}$ defined as follows for all $m,n \in M$: 
\begin{equation}\label{def:P-Zinbiel}\begin{gathered} 
m \triangleleft_\mathsf{P} n \colon \!\!\!= \mathsf{P}(n) m 
 \end{gathered}\end{equation}
 The induced symmetrized product $\ast_{\triangleleft_\mathsf{P}}$ is worked out to be as follows for all $m,n \in M$: 
 \begin{equation}\label{def:P-Zinbiel-ast}\begin{gathered} 
m \ast_{\triangleleft_\mathsf{P}} n = \mathsf{P}(n) m + \mathsf{P}(m) n 
 \end{gathered}\end{equation}
\end{lemma}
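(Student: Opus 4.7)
The plan is to verify the two assertions of the lemma by direct computation, the Zinbiel identity first and then the formula for the symmetrized product. Note that the lemma as stated should read $\mathsf{P} : M \to A$ (otherwise $\mathsf{P}(n)$ for $n \in M$ would be ill-typed), which is the definition of an integration from Def \ref{def:der-int}.

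First I would observe that $\triangleleft_\mathsf{P}$ is well-defined and $k$-bilinear: since $\mathsf{P}$ is $k$-linear and the $A$-action on $M$ is $k$-bilinear (in particular $k$-linear in each argument), the operation $(m,n) \mapsto \mathsf{P}(n)m$ is $k$-linear in both variables. This handles the bilinearity requirement of Def \ref{def:Zinbiel-algebra} in the special case $A = k$.

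Next I would verify the Zinbiel identity \eqref{def:Zinbiel}. The left-hand side unfolds as
\[
(x \triangleleft_\mathsf{P} y) \triangleleft_\mathsf{P} z \;=\; \mathsf{P}(z)\,(x \triangleleft_\mathsf{P} y) \;=\; \mathsf{P}(z)\,\mathsf{P}(y)\,x,
\]
using only the definition of $\triangleleft_\mathsf{P}$ and the fact that $M$ is an $A$-module. For the right-hand side,
\[
x \triangleleft_\mathsf{P} (y \triangleleft_\mathsf{P} z) \;+\; x \triangleleft_\mathsf{P} (z \triangleleft_\mathsf{P} y) \;=\; \mathsf{P}\!\left(\mathsf{P}(z)y\right)x \;+\; \mathsf{P}\!\left(\mathsf{P}(y)z\right)x.
\]
The Rota-Baxter rule \eqref{Rota-Baxter} applied to $m = y$, $n = z$ gives $\mathsf{P}(y)\mathsf{P}(z) = \mathsf{P}(\mathsf{P}(y)z) + \mathsf{P}(\mathsf{P}(z)y)$, so the right-hand side collapses to $\mathsf{P}(y)\mathsf{P}(z)\,x$, which equals $\mathsf{P}(z)\mathsf{P}(y)\,x$ by commutativity of $A$. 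This matches the left-hand side, so \eqref{def:Zinbiel} holds.

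Finally, the formula for the symmetrized product is immediate from the definitions: applying \eqref{def:symprod} gives
\[
m \ast_{\triangleleft_\mathsf{P}} n \;=\; m \triangleleft_\mathsf{P} n \;+\; n \triangleleft_\mathsf{P} m \;=\; \mathsf{P}(n)\,m \;+\; \mathsf{P}(m)\,n,
\]
which is \eqref{def:P-Zinbiel-ast}. There is no real obstacle in this proof; the whole content is that the Rota-Baxter rule is, up to the action of $A$ on $M$ and the commutativity of $A$, a rewriting of the Zinbiel identity. The only subtle point worth flagging is the type correction on $\mathsf{P}$ and being careful that commutativity of $A$ (not of $M$) is what allows the two sides to match.
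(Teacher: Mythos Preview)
Your proof is correct; the verification of the Zinbiel identity via the Rota-Baxter rule is exactly the intended computation, and your observation about the typo $\mathsf{P}: A \to M$ (which should read $\mathsf{P}: M \to A$) is also right. Note that the paper does not actually supply its own proof of this lemma---it is stated with a citation to \cite[Prop 2.10]{uchino2008quantum} and treated as background---so there is no in-paper argument to compare against; your direct computation is the standard one.
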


\begin{example} \normalfont Let's apply Lemma \ref{lem:P-Zinbiel} to the integrations from Ex \ref{ex:FTC-pairs}.(\ref{ex:FTC-poly}) and (\ref{ex:FTC-smooth}) to obtain Zinbiel algebras. 
\begin{enumerate}[{\em (i)}]
\item Let $k$ be a field of characteristic zero. Then $k[x]$ is a $k$-Zinbiel algebra where:
\begin{align*}
    x^m \triangleleft_\mathsf{P} x^n = \frac{1}{n+1} x^{m+n+1} && x^m \ast_{\triangleleft_\mathsf{P}} x^n = \frac{m+n+2}{(m+1)(n+1)} x^{m+n+1}
\end{align*}
\item $\mathcal{C}^\infty(\mathbb{R})$ is an $\mathbb{R}$-Zinbiel algebra where:
\begin{align*}
(f \triangleleft_\mathsf{P} g)(x) =  f(x) \int \limits^x_0 g(t) \mathsf{d}t && (f \ast_{\triangleleft_\mathsf{P}} g)(x) = f(x) \int \limits^x_0 g(t) \mathsf{d}t + g(x) \int \limits^x_0 f(t) \mathsf{d}t
\end{align*}
\end{enumerate}
\end{example}

We conclude this section by describing the category of Zinbiel algebras, which we will show is equivalent to the category of FTC-pairs. 

\begin{definition}\label{def:Zinbiel-bigcat} Let $k$ be a commutative ring. Let $\mathsf{ZIN}$ be the category where: 
\begin{enumerate}[{\em (i)}]
\item The objects of $\mathsf{ZIN}$ are pairs $(A,Z)$, consisting of a commutative $k$-algebra $A$ and an $A$-Zinbiel algebra $Z$;
\item The maps of $\mathsf{ZIN}$ are pairs $(f,g): (A,Z) \to (A^\prime, Z^\prime)$ consisting of a $k$-algebra morphism ${f: A \to A^\prime}$ and an $A$-Zinbiel algebra morphism $g: Z \to Z^\prime$ in the sense that (\ref{gmodmap}) is satisfied and that the following equality holds: 
\begin{align}\label{def:Zin-map}
g( x \triangleleft y) = g(x) \triangleleft g(y)
\end{align}
 for all $x,y \in Z$.
\end{enumerate}
Composition in $\mathsf{ZIN}$ is defined point-wise, that is, $(f,g) \circ (h,k) = (f \circ h, g \circ k)$, and identity maps in $\mathsf{ZIN}$ are given by pairs of identity maps $(\mathsf{id}_A, \mathsf{id}_Z)$. 
\end{definition}

Again, for our constructions in Sec \ref{sec:equivalence}, it will be useful to note what are isomorphisms in $\mathsf{ZIN}$. We leave this as an exercise for the reader to check for themselves. 

\begin{lemma}\label{lem:ZIN-iso} A map $(f,g) \in \mathsf{ZIN}$ is an isomorphism if and only if $f$ and $g$ are isomorphisms. So in particular, if $(f,g) \in \mathsf{ZIN}$, and both $f$ and $g$ are isomorphisms, then $(f^{-1}, g^{-1}) \in \mathsf{ZIN}$. 
\end{lemma}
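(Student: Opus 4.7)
The plan is a standard ``inverse of a structured map is structured'' argument, split into the two implications. For the ($\Rightarrow$) direction, I would observe that since composition and identities in $\mathsf{ZIN}$ are both defined componentwise, any inverse $(f^{-1}_{\mathsf{ZIN}}, g^{-1}_{\mathsf{ZIN}})$ of $(f,g)$ in $\mathsf{ZIN}$ has components that are, respectively, a two-sided inverse to $f$ as a $k$-algebra morphism and a two-sided inverse to $g$ as a $k$-linear map. Thus $f$ and $g$ must already be isomorphisms in the ambient categories, which handles this direction essentially for free.

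The substantive direction is ($\Leftarrow$): assuming $f \colon A \to A^\prime$ is an isomorphism of $k$-algebras and $g \colon Z \to Z^\prime$ is a $k$-linear isomorphism, I would verify that $(f^{-1}, g^{-1})$ belongs to $\mathsf{ZIN}$. It is a classical fact that the set-theoretic inverse of a $k$-algebra isomorphism is again a $k$-algebra morphism, so $f^{-1}$ needs no further comment. For the module-compatibility condition (\ref{gmodmap}), I would pick $a^\prime \in A^\prime$ and $m^\prime \in Z^\prime$, set $a = f^{-1}(a^\prime)$ and $m = g^{-1}(m^\prime)$, and apply $g^{-1}$ to the identity $g(am) = f(a) g(m) = a^\prime m^\prime$ coming from (\ref{gmodmap}) for $(f,g)$; this yields $g^{-1}(a^\prime m^\prime) = am = f^{-1}(a^\prime) g^{-1}(m^\prime)$, as required.

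For preservation of the Zinbiel operator, I would run the analogous argument using (\ref{def:Zin-map}): given $x^\prime, y^\prime \in Z^\prime$, set $x = g^{-1}(x^\prime)$ and $y = g^{-1}(y^\prime)$ and apply $g^{-1}$ to $g(x \triangleleft y) = g(x) \triangleleft g(y) = x^\prime \triangleleft y^\prime$ to conclude that $g^{-1}(x^\prime \triangleleft y^\prime) = g^{-1}(x^\prime) \triangleleft g^{-1}(y^\prime)$. Once these three conditions are checked, $(f^{-1}, g^{-1})$ is a well-defined arrow in $\mathsf{ZIN}$, and it is a two-sided inverse to $(f,g)$ because composition in $\mathsf{ZIN}$ is componentwise. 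No step is an obstacle; the only mild subtlety is remembering to rewrite every identity on the target side by pushing through $g^{-1}$ using representatives on the source side, which is routine.
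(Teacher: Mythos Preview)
Your argument is correct and is exactly the routine verification the paper intends: the paper itself omits the proof entirely, stating only ``We leave this as an exercise for the reader to check for themselves.'' Your componentwise treatment of both directions, together with the push-through-$g^{-1}$ verification of (\ref{gmodmap}) and (\ref{def:Zin-map}), is precisely the expected exercise.
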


For a fixed algebra $A$, we can consider the subcategory of $\mathsf{ZIN}$ whose objects are pairs of the form $(A,Z)$ and whose maps are of the form $(1_A, g)$. An equivalent way of describing this subcategory is as the category of $A$-Zinbiel algebra defined in the expected way. In particular, we have $\mathsf{ZIN}_k$, the category of $k$-Zinbiel algebras, which we will show is equivalent to the category of augmented FTC-pairs. Explicitly: 

\begin{definition}\label{def:Zinbiel-category} Let $k$ be a commutative ring. Let $\mathsf{ZIN}_k$ be the category where: 
\begin{enumerate}[{\em (i)}]
\item The objects of $\mathsf{ZIN}_k$ are $k$-Zinbiel algebra $Z$;
\item The maps of $\mathsf{ZIN}_k$ are $k$-Zinbiel algebra morphisms, that is, $k$-module morphisms which also satisfy (\ref{def:Zin-map}). 
\end{enumerate}
Composition and identity maps are defined as for $k$-module morphisms. 
\end{definition}


\section{Equivalence}\label{sec:equivalence}

In this section, we prove the main result of this paper that the categories $\mathsf{FTC}$ and $\mathsf{ZIN}$ are equivalent. To do so, we must give functors $\mathsf{FTC} \to \mathsf{ZIN}$ and $\mathsf{ZIN} \to \mathsf{FTC}$, and natural isomorphisms $1_{\mathsf{FTC}}%
\Rightarrow \mathcal{G} \circ \mathcal{F}$ and $\mathcal{F} \circ \mathcal{G} \Rightarrow 1_{\mathsf{ZIN}}$. Throughout this section, fix a base commutative ring $k$. 

To define the functor $\mathsf{FTC} \to \mathsf{ZIN}$, we must first explain how from an FTC-pair we obtain a Zinbiel algebra. We have already seen in Lemma \ref{lem:P-Zinbiel} how from an integration we obtain a Zinbiel algebra over the base commutative ring. For an FTC-pair, we can improve upon this and instead obtain a Zinbiel algebra over the constants of the derivation. 

\begin{lemma}\label{lem:FTC-ZIN} If $\xymatrix{ A \ar@/^/[r]^-{\mathsf{D}}  &  \ar@/^/[l]^-{\mathsf{P}}  M
  }$ is an FTC-pair, then $M$ is a $\mathsf{ker}(\mathsf{D})$-Zinbiel algebra with Zinbiel operator $\triangleleft_\mathsf{P}$ defined as in Lemma \ref{lem:P-Zinbiel}.  
\end{lemma}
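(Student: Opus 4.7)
The plan is to verify three things: that $\mathsf{ker}(\mathsf{D})$ really does act on $M$, that $\triangleleft_\mathsf{P}$ is $\mathsf{ker}(\mathsf{D})$-bilinear, and that the Zinbiel identity holds. The third item is essentially free: Lemma \ref{lem:P-Zinbiel} already shows that $\triangleleft_\mathsf{P}$ satisfies (\ref{def:Zinbiel}) as an identity on elements of $M$, and that identity does not depend on the chosen base ring. The first item is also quick, since the Leibniz rule together with $\mathsf{D}(1)=0$ (noted in the proof of Prop \ref{prop:FTC2}) shows that $\mathsf{ker}(\mathsf{D})$ is a $k$-subalgebra of $A$, so $M$ restricts to a $\mathsf{ker}(\mathsf{D})$-module. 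The real content lives in the second item.

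For bilinearity of $\triangleleft_\mathsf{P}$, left-linearity is automatic: for $c \in \mathsf{ker}(\mathsf{D})$, $(cm) \triangleleft_\mathsf{P} n = \mathsf{P}(n)(cm) = c(\mathsf{P}(n)m) = c(m \triangleleft_\mathsf{P} n)$ using only the $A$-module structure. The nontrivial step is right-linearity, which reduces to showing that $\mathsf{P}$ itself is $\mathsf{ker}(\mathsf{D})$-linear, i.e.\ that
\[ \mathsf{P}(cn) = c\,\mathsf{P}(n) \qquad \text{for all } c \in \mathsf{ker}(\mathsf{D}),\ n \in M. \]
This is the main obstacle, and the plan is to extract it from Prop \ref{prop:FTC2}. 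Applying FTC2 in the form $\mathsf{P}(\mathsf{D}(x)) = x - \mathsf{E}(x)$ to $x = c\,\mathsf{P}(n)$ and using Leibniz plus FTC1 to compute $\mathsf{D}(c\,\mathsf{P}(n)) = c\,\mathsf{D}(\mathsf{P}(n)) + \mathsf{D}(c)\,\mathsf{P}(n) = cn$, one obtains
\[ \mathsf{P}(cn) = c\,\mathsf{P}(n) - \mathsf{E}\bigl(c\,\mathsf{P}(n)\bigr). \]
The task then collapses to showing $\mathsf{E}(c\,\mathsf{P}(n)) = 0$. Since Prop \ref{prop:FTC2} gives that $\mathsf{E}$ is a $k$-algebra morphism, $\mathsf{E}(c\,\mathsf{P}(n)) = \mathsf{E}(c)\,\mathsf{E}(\mathsf{P}(n))$, and directly from the definition of $\mathsf{E}$ plus FTC1 we get $\mathsf{E}(\mathsf{P}(n)) = \mathsf{P}(n) - \mathsf{P}(\mathsf{D}(\mathsf{P}(n))) = \mathsf{P}(n) - \mathsf{P}(n) = 0$.

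Once $\mathsf{P}(cn) = c\,\mathsf{P}(n)$ is in hand, right-linearity is immediate: $m \triangleleft_\mathsf{P} (cn) = \mathsf{P}(cn)\,m = c\,\mathsf{P}(n)\,m = c(m \triangleleft_\mathsf{P} n)$. Combined with the automatic Zinbiel identity inherited from Lemma \ref{lem:P-Zinbiel}, this establishes that $(M, \triangleleft_\mathsf{P})$ is a $\mathsf{ker}(\mathsf{D})$-Zinbiel algebra, completing the proof. The only conceptual point to emphasize is that FTC2 (via the idempotent algebra morphism $\mathsf{E}$) is precisely what upgrades the $k$-Zinbiel structure of Lemma \ref{lem:P-Zinbiel} to a Zinbiel structure over the full algebra of constants.
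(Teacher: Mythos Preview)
Your proof is correct and follows essentially the same approach as the paper: both reduce to showing that $\mathsf{P}$ is $\mathsf{ker}(\mathsf{D})$-linear, and both establish this via the computation $\mathsf{P}(cn) = \mathsf{P}(\mathsf{D}(c\,\mathsf{P}(n))) = c\,\mathsf{P}(n) - \mathsf{E}(c)\,\mathsf{E}(\mathsf{P}(n)) = c\,\mathsf{P}(n)$, using FTC1, FTC2, the multiplicativity of $\mathsf{E}$, and $\mathsf{E}\circ\mathsf{P}=0$. The only cosmetic difference is that the paper cites $\mathsf{E}\circ\mathsf{P}=0$ from the proof of Prop~\ref{prop:FTC-integro}, whereas you re-derive it on the spot.
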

\begin{proof} It is a well-known fact that for a derivation $\mathsf{D}$, $\mathsf{ker}(\mathsf{D})$ is a sub-algebra of $A$. So $M$ is indeed a $\mathsf{ker}(\mathsf{D})$-module. Moreover, by Lemma \ref{lem:P-Zinbiel}, we already know that $\triangleleft_\mathsf{P}$ satisfies the Zinbiel identity (\ref{def:Zinbiel}). As such, it remains to explain why $\triangleleft_\mathsf{P}$ is $\mathsf{ker}(\mathsf{D})$-bilinear. So let $m,n \in M$ and $c \in \mathsf{ker}(\mathsf{D})$. Clearly, we have that $c(m \triangleleft n) = cm \triangleleft n$. So we need only show that $c(m \triangleleft n) = m \triangleleft cn$. To do so, we will show that the integration $\mathsf{P}$ is $\mathsf{ker}(\mathsf{D})$-linear. Recall that in the proof of Prop \ref{prop:FTC-integro}, we showed that $\mathsf{E} \circ \mathsf{P} =0$. Then using FTC1 and FTC2, we compute that: 
\begin{align*}
\mathsf{P}\left( cn \right) = \mathsf{P}\left( c \mathsf{D}\left( \mathsf{P}(n) \right) \right) = \mathsf{P}\left( \mathsf{D}\left( c \mathsf{P}(n) \right) \right) = c \mathsf{P}(n) - \mathsf{E}\left( c \mathsf{P}(n) \right) = c \mathsf{P}(n) -\mathsf{E}\left( c \right) \underbrace{\mathsf{E}\left( \mathsf{P}(n) \right)}_{=0} =  c\mathsf{P}(n)
\end{align*}
Thus $\mathsf{P}$ is $\mathsf{ker}(\mathsf{D})$-linear, and it follows that $c(m \triangleleft n) = m \triangleleft cn$. So we conclude that $\triangleleft_\mathsf{P}$ is $\mathsf{ker}(\mathsf{D})$-bilinear, and therefore $M$ is a $\mathsf{ker}(\mathsf{D})$-Zinbiel algebra as desired. 
\end{proof}

Let's explain how from a map in $\mathsf{FTC}$ we obtain a map in $\mathsf{ZIN}$. So let $(f,g): \left( \xymatrix{ A \ar@/^/[r]^-{\mathsf{D}}  &  \ar@/^/[l]^-{\mathsf{P}}  M
  } \right) \to \left( \xymatrix{ C \ar@/^/[r]^-{\mathsf{B}}  &  \ar@/^/[l]^-{\mathsf{Q}}  N
  } \right)$ be a map in $\mathsf{FTC}$. Note by (\ref{FTC-maps}), $g \circ \mathsf{D} = \mathsf{B} \circ f$ implies that if $c \in \mathsf{ker}(\mathsf{D})$, then $f(c) \in \mathsf{ker}(\mathsf{B})$. As such, restricting $f$ to $\mathsf{ker}(\mathsf{D})$, we obtain a $k$-algebra morphism $\overline{f}: \mathsf{ker}(\mathsf{D}) \to \mathsf{ker}(\mathsf{B})$ (explicitly, $\overline{f}(c) = f(c)$). 

  \begin{lemma}\label{lem:FTC-ZIN-map} If $(f,g)\!:\! \left( \xymatrix{ A \ar@/^/[r]^-{\mathsf{D}}  &  \ar@/^/[l]^-{\mathsf{P}}  M
  } \right) \to \left( \xymatrix{ C \ar@/^/[r]^-{\mathsf{B}}  &  \ar@/^/[l]^-{\mathsf{Q}}  N
  } \right)$ is a map in $\mathsf{FTC}$, then ${(\overline{f}, g)\!:\! (\mathsf{ker}(\mathsf{D}), M) \to (\mathsf{ker}(\mathsf{B}), N)}$ is a map in $\mathsf{ZIN}$. 
  \end{lemma}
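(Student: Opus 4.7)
The plan is to unwind the three defining conditions for a morphism in $\mathsf{ZIN}$ and check each, using only the hypotheses that come with a morphism in $\mathsf{FTC}$ together with Lemma \ref{lem:FTC-ZIN} for the Zinbiel structures. The setup already observes that $\overline{f}$ is well-defined as a $k$-algebra morphism $\mathsf{ker}(\mathsf{D}) \to \mathsf{ker}(\mathsf{B})$ (since $g \circ \mathsf{D} = \mathsf{B} \circ f$ forces $f$ to send $\mathsf{D}$-constants to $\mathsf{B}$-constants), so the only things left to verify are the module-compatibility condition (\ref{gmodmap}) for the pair $(\overline{f},g)$ and the Zinbiel-operator preservation (\ref{def:Zin-map}).

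First I would dispatch the module-compatibility. We already know $g(am) = f(a) g(m)$ for all $a \in A$ and $m \in M$, and by definition $\overline{f}(c) = f(c)$ for $c \in \mathsf{ker}(\mathsf{D})$, so specializing gives $g(cm) = f(c) g(m) = \overline{f}(c) g(m)$, which is exactly (\ref{gmodmap}) for $(\overline{f},g)$ regarded as a map between $\mathsf{ker}(\mathsf{D})$- and $\mathsf{ker}(\mathsf{B})$-modules.

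Next I would verify that $g$ preserves the Zinbiel operator. By Lemma \ref{lem:FTC-ZIN}, the Zinbiel operators on $M$ and $N$ are $m \triangleleft_{\mathsf{P}} n = \mathsf{P}(n) m$ and $g(m) \triangleleft_{\mathsf{Q}} g(n) = \mathsf{Q}(g(n)) g(m)$. Using the module property of $g$ and the commutation $f \circ \mathsf{P} = \mathsf{Q} \circ g$ from (\ref{FTC-maps-equalities}), we compute
\[
g(m \triangleleft_{\mathsf{P}} n) \;=\; g\bigl(\mathsf{P}(n)\, m\bigr) \;=\; f\bigl(\mathsf{P}(n)\bigr)\, g(m) \;=\; \mathsf{Q}\bigl(g(n)\bigr)\, g(m) \;=\; g(m) \triangleleft_{\mathsf{Q}} g(n),
\]
which is (\ref{def:Zin-map}).

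These two checks, together with the already-established algebra-morphism property of $\overline{f}$, show that $(\overline{f},g)$ satisfies all the axioms of a map in $\mathsf{ZIN}$. There is no real obstacle here: the key point is that the two diagrams encoded in (\ref{FTC-maps}) are precisely the data needed to transport both the restricted base-algebra structure and the Zinbiel operator (which is built solely from $\mathsf{P}$ and the module action). The statement is essentially a direct consequence of the fact that the Zinbiel operator on $M$ is defined from ingredients that a morphism of FTC-pairs by construction preserves.
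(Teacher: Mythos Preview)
Your proof is correct and follows essentially the same approach as the paper: both reduce the module-compatibility of $(\overline{f},g)$ to that of $(f,g)$ by restriction, and both verify preservation of the Zinbiel operator via the identical chain $g(\mathsf{P}(n)m) = f(\mathsf{P}(n))g(m) = \mathsf{Q}(g(n))g(m)$. The only difference is that you spell out the module-compatibility step in slightly more detail than the paper does.
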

  \begin{proof} Since $(f,g)$ satisfies (\ref{gmodmap}), it follows that $(\overline{f}, g)$ does as well. So it remains to show that $g$ preserves the Zinbiel operator, that is, $g\left( m \triangleleft_\mathsf{P} n \right) = g(m) \triangleleft_\mathsf{Q} g(n)$. Then using (\ref{gmodmap}) and (\ref{FTC-maps}), we compute that: 
  \begin{align*} 
  g\left( m \triangleleft_\mathsf{P} n \right) = g\left( \mathsf{P}(n) m \right) = f\left( \mathsf{P}(n) \right) g(m) = \mathsf{Q}(g(n) ) g(m) = g(m) \triangleleft_\mathsf{Q} g(n)
  \end{align*}
  So $g$ satisfies (\ref{def:Zin-map}), and therefore we conclude that $(\overline{f}, g)$ is a map in $\mathsf{ZIN}$.
  \end{proof}
  
Define the functor $\mathcal{F}\!:\! \mathsf{FTC} \to \mathsf{ZIN}$ on objects as $\mathcal{F}\left( \xymatrix{ A \ar@/^/[r]^-{\mathsf{D}}  &  \ar@/^/[l]^-{\mathsf{P}}  M
  }\right) \!\!\colon \!\!\!= \!(\mathsf{ker}(\mathsf{D}), M)$, and on maps as ${\mathcal{F}(f,g) \colon \!\!\!= (\overline{f}, g)}$. 

  \begin{proposition} $\mathcal{F}: \mathsf{FTC} \to \mathsf{ZIN}$ is a functor. 
  \end{proposition}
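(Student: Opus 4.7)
The plan is to verify that $\mathcal{F}$ preserves identities and composition, since the fact that $\mathcal{F}$ is well-defined on objects is handled by Lemma \ref{lem:FTC-ZIN} and on morphisms by Lemma \ref{lem:FTC-ZIN-map}. Both remaining functor axioms will reduce to the observation that the assignment $f \mapsto \overline{f}$, restricting an algebra morphism to kernels of derivations, is itself functorial.

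First, I would check identities. The identity on an FTC-pair $\xymatrix{ A \ar@/^/[r]^-{\mathsf{D}} & \ar@/^/[l]^-{\mathsf{P}} M}$ in $\mathsf{FTC}$ is $(\mathsf{id}_A, \mathsf{id}_M)$. Since $\overline{\mathsf{id}_A}(c) = \mathsf{id}_A(c) = c$ for every $c \in \mathsf{ker}(\mathsf{D})$, we have $\overline{\mathsf{id}_A} = \mathsf{id}_{\mathsf{ker}(\mathsf{D})}$, and therefore $\mathcal{F}(\mathsf{id}_A, \mathsf{id}_M) = (\mathsf{id}_{\mathsf{ker}(\mathsf{D})}, \mathsf{id}_M)$, which is the identity on $(\mathsf{ker}(\mathsf{D}), M)$ in $\mathsf{ZIN}$.

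Next, for composition, I would take a composable pair $(h,k)$ followed by $(f,g)$ in $\mathsf{FTC}$, whose composite by definition is $(f \circ h,\, g \circ k)$. The second components compose strictly, so the only thing to verify is that $\overline{f \circ h} = \overline{f} \circ \overline{h}$. For any $c$ in the source kernel, one computes $\overline{f \circ h}(c) = (f \circ h)(c) = f(h(c)) = \overline{f}(\overline{h}(c))$, where in the last step we use that $\overline{h}(c) = h(c)$ lies in the intermediate kernel because the square (\ref{FTC-maps}) commutes. Hence $\mathcal{F}((f,g) \circ (h,k)) = (\overline{f} \circ \overline{h},\, g \circ k) = \mathcal{F}(f,g) \circ \mathcal{F}(h,k)$.

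There is no serious obstacle here: the conceptual content has already been absorbed into Lemmas \ref{lem:FTC-ZIN} and \ref{lem:FTC-ZIN-map}, and what remains are two short elementwise checks on the kernel restriction.
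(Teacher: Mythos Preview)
Your proposal is correct and follows exactly the same approach as the paper: well-definedness on objects and maps is delegated to Lemmas \ref{lem:FTC-ZIN} and \ref{lem:FTC-ZIN-map}, and what remains is the routine check that $\mathcal{F}$ preserves identities and composition. The paper dismisses this last part with ``it is straightforward to see,'' whereas you spell out the elementwise verification that $f \mapsto \overline{f}$ is functorial; your version is simply more detailed.
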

  \begin{proof} $\mathcal{F}$ is well-defined on objects by Lemma \ref{lem:FTC-ZIN} and also well-defined on maps by Lemma \ref{lem:FTC-ZIN-map}. It is straightforward to see that $\mathcal{F}$ also preserves composition and identities. Therefore, $\mathcal{F}$ is indeed a functor. 
  \end{proof}

To define the functor $\mathsf{ZIN} \to \mathsf{FTC}$, we first explain how to go from a Zinbiel algebra to an FTC-pair. So given an $A$-Zinbiel algebra $Z$, first define the commutative $k$-algebra $A \rtimes Z$ whose underlying $k$-module is $A \times Z$, and whose multiplication is defined as follows: 
\begin{align}
(a,x) (b,y) \colon \!\!\!= (ab, ay + bx + x \ast_\triangleleft y) 
\end{align}
where $\ast_\triangleleft y$ is defined as in Lemma \ref{lem:symprod}, and with unit $(1,0)$. Now $Z$ is an $A \rtimes Z$-module with module action defined as follows: 
\begin{align}
(a,x) y \colon \!\!\!= ay + y \triangleleft x
\end{align}

\begin{remark} \normalfont It is important to note that even though $Z$ is a non-unital sub-algebra of $A \rtimes Z$, the module action is different than simply multiplication, that is, $(a,x)y$ is not equal to $(a,x)(0,y)$.
\end{remark}

Now that we have our algebra $A \rtimes Z$ and our module $Z$, it remains to give the derivation and integration. Consider the projection onto the $Z$ part $\pi_Z: A \rtimes Z \to Z$, $\pi_Z(a,x) = x$, and similarly the injection $\iota_Z: Z \to A \rtimes Z$, $\iota_Z(x) = (0,x)$. This projection and injection will be the derivation and integration respectively. 

\begin{lemma}\label{lem:ZIN-FTC} For an $A$-Zinbiel algebra $Z$, $\xymatrix{ A \rtimes Z \ar@/^1pc/[r]^-{\pi_Z}  &  \ar@/^1pc/[l]^-{\iota_Z}  Z}$ is an FTC-pair. 
\end{lemma}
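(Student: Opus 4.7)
The plan is to verify the four required conditions directly (algebra/module axioms for the ambient structures, the Leibniz rule for $\pi_Z$, the Rota-Baxter rule for $\iota_Z$, and FTC1/FTC2). Most of the verifications are bookkeeping on pairs $(a,x) \in A \rtimes Z$, so the main conceptual points to isolate are the ones where the Zinbiel identity and the commutative-associative structure of $\ast_\triangleleft$ (Lemma \ref{lem:symprod}) actually get used.

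First I would check that $A \rtimes Z$ really is a commutative $k$-algebra with unit $(1,0)$; commutativity is immediate from the symmetry of $\ast_\triangleleft$, and associativity expands to a sum of terms that matches on both sides after invoking the associativity of $\ast_\triangleleft$. Next I would verify that the formula $(a,x)\cdot y := ay + y \triangleleft x$ makes $Z$ into an $(A \rtimes Z)$-module. The nontrivial axiom is $\bigl((a,x)(b,y)\bigr)\cdot z = (a,x)\cdot\bigl((b,y)\cdot z\bigr)$, which after expansion reduces to the single identity
\[ z \triangleleft (x \ast_\triangleleft y) = (z \triangleleft y) \triangleleft x, \]
and this is precisely the Zinbiel identity (\ref{def:Zinbiel}) written as $(z\triangleleft y)\triangleleft x = z\triangleleft(y\triangleleft x) + z\triangleleft(x\triangleleft y)$. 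This is the step where the Zinbiel axiom is used essentially, and I expect it to be the main (and really only) obstacle.

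With the ambient structures in place, verifying the Leibniz rule for $\pi_Z$ is a direct computation: $\pi_Z\bigl((a,x)(b,y)\bigr) = ay + bx + x \ast_\triangleleft y$, while $(a,x)\cdot \pi_Z(b,y) + (b,y)\cdot \pi_Z(a,x) = (ay + y \triangleleft x) + (bx + x \triangleleft y) = ay + bx + x \ast_\triangleleft y$. For the Rota-Baxter rule I compute $\iota_Z(x)\iota_Z(y) = (0,x)(0,y) = (0, x\ast_\triangleleft y)$, and on the other side $\iota_Z(\iota_Z(x)\cdot y) + \iota_Z(\iota_Z(y)\cdot x) = (0, y\triangleleft x) + (0, x\triangleleft y) = (0, x\ast_\triangleleft y)$, which agree.

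Finally, FTC1 is immediate since $\pi_Z(\iota_Z(x)) = \pi_Z(0,x) = x$. For FTC2, for each $(a,x)\in A\rtimes Z$ I would set $c_{(a,x)} := (a,0)$. Then $(a,x) - \iota_Z(\pi_Z(a,x)) = (a,x) - (0,x) = (a,0) = c_{(a,x)}$ and $\pi_Z(a,0) = 0$, so $c_{(a,x)} \in \mathsf{ker}(\pi_Z)$. Compatibility with multiplication holds because $(a,0)(b,0) = (ab, 0) = c_{(a,x)(b,y)}$, using that the second coordinate of $(a,0)(b,0)$ involves only $0 \cdot y$, $0 \cdot x$ and $0 \ast_\triangleleft 0$. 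Hence both FTC1 and FTC2 are satisfied, and $\xymatrix{ A \rtimes Z \ar@/^1pc/[r]^-{\pi_Z}  &  \ar@/^1pc/[l]^-{\iota_Z}  Z}$ is an FTC-pair.
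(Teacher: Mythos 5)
Your proposal is correct and, for the parts the paper actually proves (Leibniz rule for $\pi_Z$, Rota--Baxter rule for $\iota_Z$, FTC1, and FTC2 with $c_{(a,x)}=(a,0)$), it follows essentially the same computations. The one genuine addition is that you also verify the ambient structures that the paper asserts without proof when defining $A \rtimes Z$ and the action $(a,x)\cdot y = ay + y \triangleleft x$: in particular you correctly isolate that associativity of the module action reduces to $z \triangleleft (x \ast_\triangleleft y) = (z \triangleleft y) \triangleleft x$, which is exactly the Zinbiel identity (\ref{def:Zinbiel}), and that associativity of $A \rtimes Z$ reduces to associativity of $\ast_\triangleleft$ from Lemma \ref{lem:symprod}. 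That makes your write-up more self-contained than the paper's, and it pinpoints the only place the Zinbiel axiom is actually used; no gaps.
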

\begin{proof} We first show that $\pi_Z$ satisfies the Leibniz rule. On the one hand, we have that:  
\begin{align*}
\pi_Z\left( (a,x) (b,y) \right) = \pi_Z(ab, ay + bx + x \ast_\triangleleft y) = ay + bx + x \ast_\triangleleft y
\end{align*}
On the other hand, we have that: 
\begin{align*}
(a,x) \pi_Z\left(b,y \right) + (b,y) \pi_Z\left(a,x \right) = (a,x)y + (b,y) x = ay + y \triangleleft x + bx + x \triangleleft y = ay + bx + x \ast_\triangleleft y 
\end{align*}
Therefore, $\pi_Z\left( (a,x) (b,y) \right) = (a,x) \pi_Z\left(b,y \right) + (b,y) \pi_Z\left(a,x \right)$. As such, $\pi_Z$ is a derivation. Next, we show that $\iota_Z$ satisfies the Rota-Baxter rule. On the one hand, we have that: 
\begin{align*}
\iota_Z(x) \iota_Z(y) = (0,x) (0,y) = (0, x \ast_\triangleleft y)
\end{align*}
On the other hand, we have that: 
\begin{align*}
\iota_Z\left( \iota_Z(x) y \right) + \iota_Z\left( \iota_Z(y) z \right) = \iota_Z\left( (0,x) y \right) + \iota_Z\left( (0,y) x \right) = \iota_Z\left( y \triangleleft x \right) + \iota_Z\left( x \triangleleft y \right) = (0, y \triangleleft x) + (0, x \triangleleft y) = (0, x \ast_\triangleleft y)
\end{align*}
Therefore, $\iota_Z(x) \iota_Z(y) = \iota_Z\left( \iota_Z(x) y \right) + \iota_Z\left( \iota_Z(y) z \right)$. As such, $\iota_Z$ is an integration. 

Now we show that $\pi_Z$ and $\iota_Z$ satisfy FTC1 and FTC2. The former is straightforward since $\pi_Z(\iota_Z(x)) = \pi_Z(0,x)= x$, so $\pi_Z$ and $\iota_Z$ satisfy FTC1. For FTC2, first observe that $\iota_Z(\pi_Z(a,x)) = \iota_Z(x) = (0,x)$. Then define $c_{(a,x)} = (a,0)$, and note that $c_{(a,x)} \in \mathsf{ker}(\pi_Z)$. Therefore, we have that $\iota_Z(\pi_Z(a,x)) = (a,x) - c_{(a,x)}$, so (\ref{FTC2}) is satisfied. Lastly, it is straightforward to see that $c_{(a,x)(b,y)} = c_{(a,x)} c_{(b,y)}$. As such, $\pi_Z$ and $\iota_Z$ satisfy FTC2. Thus we conclude that $\xymatrix{ A \rtimes Z \ar@/^1pc/[r]^-{\pi_Z}  &  \ar@/^1pc/[l]^-{\iota_Z}  Z}$ is an FTC-pair as desired. 
\end{proof}

\begin{example}\normalfont \label{ex:FTC-shuffle2} Let's apply this construction to the reduced shuffle algebra from Ex \ref{ex:Zin-shuffle}. So let $V$ be a $k$-module. Observe that since the induced symmetrized product is precisely the shuffle product, we have an isomorphism of $k$-algebra $k \rtimes \mathsf{Sh}_+(V) \cong \mathsf{Sh}(V)$. Moreover, the resulting $\mathsf{Sh}(V)$-module structure on $\mathsf{Sh}_+(V)$ is precisely the one given in Ex \ref{ex:FTC-pairs}.(\ref{ex:FTC-shuffle}). Therefore, applying Lemma \ref{lem:ZIN-FTC} to $\mathsf{Sh}_+(V)$ results (up to isomorphism) to the FTC-pair $\xymatrix{ \mathsf{Sh}(V) \ar@/^/[r]^-{\mathsf{D}}  &  \ar@/^/[l]^-{\mathsf{P}}  \mathsf{Sh}_+(V) 
  }$ from Ex \ref{ex:FTC-pairs}.(\ref{ex:FTC-shuffle}). 
\end{example}

Turning back our attention to constructing our desired functor, we must explain how it acts on maps. Now let $(f,g): (A,Z) \to (A^\prime, Z^\prime)$ be a map in $\mathsf{ZIN}$. Then define the map $f \rtimes g: A \rtimes Z \to A^\prime \rtimes Z^\prime$ as ${(f\rtimes g)(a,x) = (f(a), g(x))}$. 

\begin{lemma}\label{lem:ZIN-FTC-map} If $(f,g): (A,Z) \to (A^\prime, Z^\prime)$ is a map $\mathsf{ZIN}$, then $(f \rtimes g, g): \left( \xymatrix{ A \rtimes Z \ar@/^1pc/[r]^-{\pi_Z}  &  \ar@/^1pc/[l]^-{\iota_Z}  Z} \right) \to \left( \xymatrix{ A^\prime \rtimes Z^\prime \ar@/^1pc/[r]^-{\pi_{Z^\prime}}  &  \ar@/^1pc/[l]^-{\iota_{Z^\prime}}  Z^\prime} \right)$ is a map in $\mathsf{FTC}$. 
\end{lemma}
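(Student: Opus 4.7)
The plan is to verify the three requirements for $(f \rtimes g, g)$ to be a morphism in $\mathsf{FTC}$: (a) that $f \rtimes g$ is a $k$-algebra morphism, (b) that $g$ satisfies the module-compatibility condition (\ref{gmodmap}) with respect to $f \rtimes g$, and (c) that the square (\ref{FTC-maps}) with $\pi_Z, \iota_Z$ on top and $\pi_{Z'}, \iota_{Z'}$ on bottom commutes. All of (b) and (c) are essentially one-line computations, so the bulk of the work is in (a), specifically checking multiplicativity.

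For (a), unit preservation is immediate: $(f \rtimes g)(1,0) = (f(1), g(0)) = (1,0)$. For multiplicativity, I would expand both sides of $(f \rtimes g)\bigl((a,x)(b,y)\bigr) = (f \rtimes g)(a,x)\,(f \rtimes g)(b,y)$ using the definition of multiplication on $A \rtimes Z$ and $A' \rtimes Z'$. On the first component, the required identity $f(ab) = f(a)f(b)$ is exactly that $f$ is a $k$-algebra morphism. On the second component, we need
\[ g(ay + bx + x \ast_\triangleleft y) \;=\; f(a)\,g(y) + f(b)\,g(x) + g(x) \ast_\triangleleft g(y). \]
Linearity of $g$ splits the left-hand side into three terms. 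The first two are handled by (\ref{gmodmap}), giving $g(ay) = f(a)g(y)$ and $g(bx) = f(b)g(x)$. The third, $g(x \ast_\triangleleft y) = g(x \triangleleft y) + g(y \triangleleft x)$, is handled by (\ref{def:Zin-map}) applied twice, yielding $g(x) \triangleleft g(y) + g(y) \triangleleft g(x) = g(x) \ast_\triangleleft g(y)$. This is the step I expect to be the least trivial, since it is precisely where the Zinbiel-morphism hypothesis on $g$ gets used and where the definition (\ref{def:symprod}) of the symmetrized product comes in.

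For (b), I would compute $g\bigl((a,x)\,y\bigr) = g(ay + y \triangleleft x) = f(a)g(y) + g(y) \triangleleft g(x)$ using linearity, (\ref{gmodmap}), and (\ref{def:Zin-map}), and compare with $(f \rtimes g)(a,x) \cdot g(y) = (f(a), g(x)) \cdot g(y) = f(a)g(y) + g(y) \triangleleft g(x)$; these agree.

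For (c), both squares commute by inspection: on the derivation side, $g(\pi_Z(a,x)) = g(x) = \pi_{Z'}(f(a), g(x)) = \pi_{Z'}\bigl((f \rtimes g)(a,x)\bigr)$, and on the integration side, $(f \rtimes g)(\iota_Z(x)) = (f \rtimes g)(0,x) = (0, g(x)) = \iota_{Z'}(g(x))$. Combined, these checks show that $(f \rtimes g, g)$ satisfies all the defining conditions of a map in $\mathsf{FTC}$.
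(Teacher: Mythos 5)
Your proposal is correct and follows essentially the same route as the paper's proof: unit and multiplicativity of $f \rtimes g$ (with the symmetrized product preserved because $g$ preserves $\triangleleft$), the module-compatibility check via (\ref{gmodmap}) and (\ref{def:Zin-map}), and the two one-line computations for the squares with $\pi$ and $\iota$. No gaps.
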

\begin{proof} We first show that $f \rtimes g$ is a $k$-algebra morphism. Since $f$ is a $k$-algebra morphism, we get that $(f \rtimes g)(1,0) = (f(1), 0) = (1,0)$, thus $f \rtimes g$ preserves the unit. For the multiplication, observe that by (\ref{def:Zin-map}), since $g$ preserves $\triangleleft$, it also preserves $\ast_{\triangleleft}$, that is, $g(x \ast_\triangleleft y) = g(x) \ast_\triangleleft g(y)$. Then using this, the fact that $f$ preserves the multiplication, and also (\ref{gmodmap}), we then compute that: 
\begin{gather*} (f \rtimes g)(a,x)(f \rtimes g)(b,y) = (f(a), g(x)) (f(b), g(y)) = (f(a)f(b), f(a)g(y) + f(b) g(x) + g(x) \ast_\triangleleft g(y) ) \\
=(f(ab), g(ay)  + g(bx) + g(x \ast_\triangleleft y) ) = (f(ab),  g(ay + b + x \ast_\triangleleft y) ) = (f \rtimes g)(ab, ay + b + x \ast_\triangleleft y) =  (f \rtimes g)\left( (a,x)(b,y) \right)
\end{gather*}
So $f \rtimes g$ also preserves the multiplication, and so we have that $f \rtimes g$ is indeed a $k$-algebra morphism. Next, we show that $f \rtimes g$ and $g$ satisfy (\ref{gmodmap}) together. Then using (\ref{gmodmap}) for $(f,g)$ and also (\ref{def:Zin-map}), we compute that: 
\[ g\left( (a,x) y \right) = g( ay + y \triangleleft x) = f(a) g(y) + g(y) \triangleleft g(x) = (f(a), g(x)) g(y) =  (f \rtimes g)(a,x) g(y)  \]
So $g\left( (a,x) y \right) =  (f \rtimes g)(a,x) g(y)$. Lastly, it remains to show that $(f\rtimes, g)$ is also compatible with the derivation and integration, that is, that the equalities in (\ref{FTC-maps-equalities}) hold. However, we easily compute that: 
\[ g\left( \pi_Z(a,x) \right) = g(x) = \pi_{Z^\prime}(f(a),g(x) ) = \pi_{Z^\prime}\left(  (f \rtimes g)(a,x) \right) \]
\[ (f \rtimes g) \left( \iota_Z(x) \right) = (f \rtimes g) (0,x) = (0, g(x)) = \iota_{Z^\prime}(g(x))  \]
Thus $ g\left( \pi_Z(a,x) \right) = \pi_{Z^\prime}\left(  (f \rtimes g)(a,x) \right)$ and $(f \rtimes g) \left( \iota_Z(x) \right) = \iota_{Z^\prime}(g(x)) $. As such, we conclude that $(f \rtimes g, g)$ is a map in $\mathsf{FTC}$, as desired. 
\end{proof}

Finally, we define the functor $\mathcal{G}: \mathsf{ZIN} \to \mathsf{FTC}$ on objects as $\mathcal{G}(A,Z) \colon \!\!\!= \xymatrix{ A \rtimes Z \ar@/^1pc/[r]^-{\pi_Z}  &  \ar@/^1pc/[l]^-{\iota_Z}  Z
  }$, and on maps as $\mathcal{G}(f,g) \colon \!\!\!= (f \rtimes g, g)$. 

    \begin{proposition} $\mathcal{G}: \mathsf{ZIN} \to \mathsf{FTC}$ is a functor. 
  \end{proposition}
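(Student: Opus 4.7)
The plan is to observe that essentially all the non-trivial content has already been packaged into the preceding two lemmas, so the proof of functoriality reduces to three small verifications: well-definedness on objects, well-definedness on morphisms, and preservation of composition and identities.

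First, I would invoke Lemma \ref{lem:ZIN-FTC} to conclude that for any object $(A,Z) \in \mathsf{ZIN}$, the pair $\mathcal{G}(A,Z) = \xymatrix{ A \rtimes Z \ar@/^1pc/[r]^-{\pi_Z}  &  \ar@/^1pc/[l]^-{\iota_Z}  Z}$ is indeed an FTC-pair, so $\mathcal{G}$ is well-defined on objects. Next, I would invoke Lemma \ref{lem:ZIN-FTC-map} to conclude that for any morphism $(f,g): (A,Z) \to (A^\prime, Z^\prime)$ in $\mathsf{ZIN}$, the pair $\mathcal{G}(f,g) = (f \rtimes g, g)$ is a morphism in $\mathsf{FTC}$, so $\mathcal{G}$ is well-defined on morphisms.

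The only remaining verifications are that $\mathcal{G}$ preserves identities and composition, both of which are direct computations from the pointwise definition. For identities, I would note that $(\mathsf{id}_A \rtimes \mathsf{id}_Z)(a,x) = (\mathsf{id}_A(a), \mathsf{id}_Z(x)) = (a,x)$, so $\mathsf{id}_A \rtimes \mathsf{id}_Z = \mathsf{id}_{A \rtimes Z}$, and hence $\mathcal{G}(\mathsf{id}_A, \mathsf{id}_Z) = (\mathsf{id}_{A \rtimes Z}, \mathsf{id}_Z)$, which is the identity on $\mathcal{G}(A,Z)$. For composition, given $(h,k): (A^{\prime\prime}, Z^{\prime\prime}) \to (A,Z)$ and $(f,g): (A,Z) \to (A^\prime, Z^\prime)$, I would compute
\[ \bigl((f \rtimes g) \circ (h \rtimes k)\bigr)(a,x) = (f \rtimes g)(h(a), k(x)) = (f(h(a)), g(k(x))) = \bigl((f \circ h) \rtimes (g \circ k)\bigr)(a,x), \]
so $(f \rtimes g) \circ (h \rtimes k) = (f \circ h) \rtimes (g \circ k)$, and therefore
\[ \mathcal{G}(f,g) \circ \mathcal{G}(h,k) = (f \rtimes g, g) \circ (h \rtimes k, k) = ((f \circ h) \rtimes (g \circ k), g \circ k) = \mathcal{G}\bigl((f,g) \circ (h,k)\bigr). \]

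There is no real obstacle here, since the substantive work (showing that $\pi_Z$ is a derivation, $\iota_Z$ is an integration, FTC1 and FTC2 hold, and $f \rtimes g$ respects the multiplication of the semidirect product) has been absorbed into Lemmas \ref{lem:ZIN-FTC} and \ref{lem:ZIN-FTC-map}. The proof is therefore essentially a citation of those two lemmas followed by two one-line computations.
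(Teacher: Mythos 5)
Your proposal is correct and follows exactly the paper's own argument: cite Lemma \ref{lem:ZIN-FTC} for well-definedness on objects, Lemma \ref{lem:ZIN-FTC-map} for well-definedness on maps, and then check preservation of composition and identities, which the paper dismisses as straightforward and you simply spell out. Nothing is missing.
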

  \begin{proof} $\mathcal{G}$ is well-defined on objects by Lemma \ref{lem:ZIN-FTC} and also well-defined on maps by Lemma \ref{lem:ZIN-FTC-map}. It is straightforward to see that $\mathcal{G}$ also preserves composition and identities. Therefore, $\mathcal{G}$ is indeed a functor.
  \end{proof}

Now that we have our functors $\mathcal{F}: \mathsf{FTC} \to \mathsf{ZIN}$ and $\mathcal{G}: \mathsf{ZIN} \to \mathsf{FTC}$, we wish to show that they give an equivalence. So we must now construct natural isomorphisms $1_{\mathsf{FTC}}%
\Rightarrow \mathcal{G} \circ \mathcal{F}$ and $\mathcal{F} \circ \mathcal{G} \Rightarrow 1_{\mathsf{ZIN}}$.

So let us first build our natural isomorphism $1_{\mathsf{FTC}}%
\Rightarrow \mathcal{G} \circ \mathcal{F}$. To do so, starting with an FTC-pair $\xymatrix{ A \ar@/^/[r]^-{\mathsf{D}}  &  \ar@/^/[l]^-{\mathsf{P}}  M }$, we first observe that: 
\[\mathcal{G}\left( \mathcal{F}\left( \xymatrix{ A \ar@/^/[r]^-{\mathsf{D}}  &  \ar@/^/[l]^-{\mathsf{P}}  M
  }\right) \right) = \xymatrix{ \mathsf{ker}(\mathsf{D}) \rtimes M \ar@/^1pc/[r]^-{\pi_M}  &  \ar@/^1pc/[l]^-{\iota_M}  M}\] So define $\eta: \left( \xymatrix{ A \ar@/^/[r]^-{\mathsf{D}}  &  \ar@/^/[l]^-{\mathsf{P}}  M } \right) \to \mathcal{G}\left( \mathcal{F}\left( \xymatrix{ A \ar@/^/[r]^-{\mathsf{D}}  &  \ar@/^/[l]^-{\mathsf{P}}  M
  }\right) \right)$ as the pair $\eta \colon \!\!\!= (\eta_1, \eta_2)$ where $\eta_1: A \to \mathsf{ker}(\mathsf{D}) \rtimes M$ and ${\eta_2: M \to M}$ are defined respectively as follows: 
  \begin{align} \eta_1(a) = \left( a - \mathsf{P}\left( \mathsf{D}(a) \right), \mathsf{D}(a) \right) && \eta_2(m) = m
  \end{align}
where note that $\eta_1$ is well-defined by FTC2. Now, define its inverse $\eta^{-1}:  \mathcal{G}\left( \mathcal{F}\left( \xymatrix{ A \ar@/^/[r]^-{\mathsf{D}}  &  \ar@/^/[l]^-{\mathsf{P}}  M
  }\right) \right) \to \left( \xymatrix{ A \ar@/^/[r]^-{\mathsf{D}}  &  \ar@/^/[l]^-{\mathsf{P}}  M } \right)$ as the pair $\eta^{-1} \colon \!\!\!= (\eta^{-1}_1, \eta^{-1}_2)$ where $\eta^{-1}_1:  \mathsf{ker}(\mathsf{D}) \rtimes M \to A$ and $\eta^{-1}_2: M \to M$ are defined as follows: 
  \begin{align} \eta^{-1}_1(c,m) = c + \mathsf{P}(m) && \eta^{-1}_2(m) = m
  \end{align}
  
\begin{lemma} $\eta: 1_{\mathsf{FTC}}%
\Rightarrow \mathcal{G} \circ \mathcal{F}$ is a natural isomorphism. 
\end{lemma}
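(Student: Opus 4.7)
The plan is to verify in sequence that (a) $\eta = (\eta_1, \eta_2)$ is a morphism in $\mathsf{FTC}$ at every FTC-pair, (b) $\eta^{-1} = (\eta^{-1}_1, \eta^{-1}_2)$ is also a morphism in $\mathsf{FTC}$ and is componentwise inverse to $\eta$ (so by Lemma \ref{lem:FTC-iso} each $\eta$ is an iso in $\mathsf{FTC}$), and (c) the assignment is natural. Throughout, it helps to keep in mind that $\eta_2$ and $\eta^{-1}_2$ are both the identity on $M$, so most of the work concerns the algebra components.

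For step (a): $\eta_1(1) = (1,0)$ follows from $\mathsf{D}(1)=0$. Multiplicativity of $\eta_1$ splits componentwise through the semidirect multiplication. The $\mathsf{ker}(\mathsf{D})$-component asks that $\mathsf{E}(a) \colon \!\!\!= a - \mathsf{P}(\mathsf{D}(a))$ is multiplicative, which is exactly Prop \ref{prop:FTC2}. The $M$-component asks that $\mathsf{D}(ab)$ equals $(a-\mathsf{P}(\mathsf{D}(a)))\mathsf{D}(b) + (b-\mathsf{P}(\mathsf{D}(b)))\mathsf{D}(a) + \mathsf{D}(a) \ast_{\triangleleft_\mathsf{P}} \mathsf{D}(b)$; unfolding $\mathsf{D}(a) \triangleleft_\mathsf{P} \mathsf{D}(b) = \mathsf{P}(\mathsf{D}(b))\mathsf{D}(a)$ and the symmetric formula shows the four $\mathsf{P}(\mathsf{D}(-))\mathsf{D}(-)$ terms cancel pairwise, reducing the identity to the Leibniz rule. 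The module-compatibility (\ref{gmodmap}), $\eta_2(am) = \eta_1(a) \eta_2(m)$, unfolds via $(c,x) m = cm + \mathsf{P}(x) m$ to the triviality $am = (a - \mathsf{P}(\mathsf{D}(a)))m + \mathsf{P}(\mathsf{D}(a))m$. Finally, the diagram (\ref{FTC-maps}) commutes: $\pi_M \circ \eta_1 = \mathsf{D}$ holds by inspection, while $\eta_1 \circ \mathsf{P} = \iota_M$ uses FTC1, since $\eta_1(\mathsf{P}(m)) = (\mathsf{P}(m) - \mathsf{P}(\mathsf{D}(\mathsf{P}(m))), \mathsf{D}(\mathsf{P}(m))) = (0, m)$.

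For step (b): $\eta^{-1}_1$ sends $(1,0)$ to $1$, and multiplicativity expands to requiring $\mathsf{P}(c_1 m_2 + c_2 m_1 + m_1 \ast_{\triangleleft_\mathsf{P}} m_2) = c_1 \mathsf{P}(m_2) + c_2 \mathsf{P}(m_1) + \mathsf{P}(m_1)\mathsf{P}(m_2)$. The first two summands use that $\mathsf{P}$ is $\mathsf{ker}(\mathsf{D})$-linear, which was established in the proof of Lemma \ref{lem:FTC-ZIN}; the third is precisely the Rota-Baxter rule applied to $\mathsf{P}(\mathsf{P}(m_1)m_2) + \mathsf{P}(\mathsf{P}(m_2)m_1)$. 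The module compatibility and the FTC-diagram for $\eta^{-1}$ are direct: $\mathsf{D}(c + \mathsf{P}(m)) = m$ by $\mathsf{D}(c)=0$ and FTC1, and $\mathsf{P}(m) = \eta^{-1}_1(0,m)$ by definition. That $\eta \circ \eta^{-1}$ and $\eta^{-1} \circ \eta$ are the identity is then componentwise: one direction uses FTC1 (to see $\mathsf{D}(\mathsf{P}(m)) = m$ inside $\eta_1(c + \mathsf{P}(m)) = (c + \mathsf{P}(m) - \mathsf{P}(m), m) = (c,m)$), the other is immediate from cancellation of $\mathsf{P}(\mathsf{D}(a))$.

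For step (c), given a morphism $(f,g)$ in $\mathsf{FTC}$, naturality reduces to checking $(\overline{f}\rtimes g)\circ \eta_1 = \eta'_1 \circ f$ on the algebra side (the module side is $g = g$). Writing out both sides and invoking the two equalities (\ref{FTC-maps-equalities}), $g\circ \mathsf{D} = \mathsf{B}\circ f$ and $f\circ \mathsf{P} = \mathsf{Q}\circ g$, produces the common value $(f(a) - \mathsf{Q}(\mathsf{B}(f(a))), \mathsf{B}(f(a)))$. The main obstacle throughout is the $M$-component multiplicativity check for $\eta_1$ and the mirror multiplicativity check for $\eta^{-1}_1$; each relies crucially on a nontrivial structural fact (Prop \ref{prop:FTC2} in the forward direction, the Rota-Baxter rule together with $\mathsf{ker}(\mathsf{D})$-linearity of $\mathsf{P}$ in the reverse direction). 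Everything else is bookkeeping once FTC1, FTC2, and the Leibniz rule are in hand.
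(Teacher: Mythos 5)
Your proof is correct and follows essentially the same route as the paper: check $\eta$ is an $\mathsf{FTC}$-morphism (unit, multiplicativity via Prop \ref{prop:FTC2} and the Leibniz rule, module compatibility, the two squares in (\ref{FTC-maps-equalities})), verify naturality using both equalities of (\ref{FTC-maps-equalities}), and conclude invertibility from Lemma \ref{lem:FTC-iso}. The only difference is that your step (b) additionally verifies by hand that $\eta^{-1}$ is itself an $\mathsf{FTC}$-morphism (multiplicativity of $\eta_1^{-1}$ via the Rota--Baxter rule and $\mathsf{ker}(\mathsf{D})$-linearity of $\mathsf{P}$); this is correct but redundant, since once $\eta_1$ and $\eta_2$ are bijective, Lemma \ref{lem:FTC-iso} already guarantees $(\eta_1^{-1},\eta_2^{-1})$ lies in $\mathsf{FTC}$.
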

\begin{proof} We first need to show that $\eta$ is well-defined, that is, that $(\eta_1, \eta_2)$ is a map in $\mathsf{FTC}$. So first, let us show that $\eta_1$ is indeed a $k$-algebra morphism. Now writing $\mathsf{E}(a) = a - \mathsf{P}\left( \mathsf{D}(a) \right)$, so $\eta_1(a) = \left( \mathsf{E}(a), \mathsf{D}(a) \right)$. Recall that by Prop \ref{prop:FTC2}, $\mathsf{E}$ was a $k$-algebra morphism. As such, we first easily see that $\eta_1(1) = \left( \mathsf{E}(1), \mathsf{D}(1) \right) = (1,0)$. So $\eta_1$ preserves the unit. Now, using the Leibniz rule, we can also compute that: 
\begin{gather*} \eta_1(a) \eta_1(b) = \left( \mathsf{E}(a), \mathsf{D}(a) \right)\left( \mathsf{E}(b), \mathsf{D}(b) \right) = \left( \mathsf{E}(a)\mathsf{E}(b), \mathsf{E}(a)\mathsf{D}(b) + \mathsf{D}(a)\mathsf{E}(b) + \mathsf{D}(a) \ast_{\triangleleft_{\mathsf{P}}} \mathsf{D}(b) \right) \\
= \left( \mathsf{E}(ab), \mathsf{E}(a)\mathsf{D}(b) + \mathsf{D}(a)\mathsf{E}(b)  + \mathsf{P}\left( \mathsf{D}(b) \right)\mathsf{D}(a) +  \mathsf{P}\left( \mathsf{D}(a) \right)\mathsf{D}(b) \right) \\
= \left( \mathsf{E}(ab), \left(\mathsf{P}\left( \mathsf{D}(a) \right) + \mathsf{E}(a) \right) \mathsf{D}(b) + \left(\mathsf{P}\left( \mathsf{D}(b) \right) + \mathsf{E}(b) \right) \mathsf{D}(a) \right) \\
= \left( \mathsf{E}(ab), a\mathsf{D}(b) +b \mathsf{D}(a) \right) = \left( \mathsf{E}(ab), \mathsf{D}(ab) \right) = \eta_1(ab) 
\end{gather*}
So $\eta_1$ preserves the multiplication as well, and thus $\eta_1$ is indeed a $k$-algebra morphism. Next, we need to show that $\eta_2$ is an $A$-module morphism, that is, that (\ref{gmodmap}) holds. So we compute: 
\[ \eta_1(a)\eta_2(m) = \left( \mathsf{E}(a), \mathsf{D}(a) \right) m = \mathsf{E}(a)m + m \triangleleft_\mathsf{P} \mathsf{D}(a) = \mathsf{E}(a)m + \mathsf{P}\left( \mathsf{D}(a) \right) m =  \left(\mathsf{P}\left( \mathsf{D}(a) \right) + \mathsf{E}(a) \right) m = am = \eta_2(am) \]
We now show that $\eta_1$ and $\eta_2$ are compatible with the derivations and integrations, that is, that the equalities of (\ref{FTC-maps-equalities}) hold. For the derivations, this is straightforward: 
\[ \pi_M(\eta_1(a)) = \pi_M \left( \mathsf{E}(a), \mathsf{D}(a) \right) =  \mathsf{D}(a) =  \eta_2\left( \mathsf{D}(a) \right) \]
For the integration, recall in the proof of Lemma \ref{lem:FTC-ZIN}, we proved that $\mathsf{E} \circ \mathsf{P} =0$. Then, using this fact and FTC1, we compute that: 
\[ \eta_1(\mathsf{P}(m)) = \left( \mathsf{E}\left(\mathsf{P}(m) \right),  \mathsf{D}\left(\mathsf{P}(m) \right) \right) = (0,m) = \iota_M(m) = \iota_M(\eta_2(m)) \]
Thus we conclude that $\eta = (\eta_1, \eta_2)$ is indeed a map in $\mathsf{FTC}$. 

Next, we show that $\eta$ is natural. So let $(f,g): \left( \xymatrix{ A \ar@/^/[r]^-{\mathsf{D}}  &  \ar@/^/[l]^-{\mathsf{P}}  M
  } \right) \to \left( \xymatrix{ C \ar@/^/[r]^-{\mathsf{B}}  &  \ar@/^/[l]^-{\mathsf{Q}}  N
  } \right)$ be a map in $\mathsf{FTC}$. Then we have that $\mathcal{G}\left( \mathcal{F}\left( f,g \right) \right) = (\overline{f} \rtimes g, g)$. Now let $\mathsf{E}^\prime(c) = c - \mathsf{Q}\left( \tilde{\mathsf{L}}(c) \right)$. Then by (\ref{FTC-maps-equalities}), we have that $f \circ \mathsf{E} = \mathsf{E}^\prime \circ f$. Then using this and (\ref{FTC-maps-equalities}) again, we compute that: 
  \begin{gather*} 
  (\overline{f} \rtimes g)(\eta_1(a)) \!= \!  (\overline{f} \rtimes g) \left( \mathsf{E}(a), \mathsf{D}(a) \right) = \left( \overline{f}\left( \mathsf{E}(a) \right), g\left(\mathsf{D}(a) \right)\right) = \left( f\left( \mathsf{E}(a) \right), g\left(\mathsf{D}(a) \right)\right) = \left( \mathsf{E}^\prime\left( f(a) \right), \tilde{\mathsf{L}}(f(a)) \right) = \eta_1(f(a))
  \end{gather*}
So $ (\overline{f} \rtimes g) \circ \eta_1 = \eta_1 \circ f$, and clearly we have that $g \circ \eta_2 = \eta_2 \circ g$. Together, these give us that $\mathcal{G}\left( \mathcal{F}\left( f,g \right) \right) \circ \eta = \eta \circ (f,g)$, and so $\eta$ is indeed a natural transformation.

Lastly, we need to explain why $\eta$ is also an isomorphism. By Lemma \ref{lem:FTC-iso}, it suffices to show that $\eta_1$ and $\eta_2$ are isomorphisms. Trivially, $\eta_2$ is an isomorphism. For $\eta_1$, that $\eta^{-1}_1 \circ \eta_1 = \mathsf{id}_A$ we easily compute that:  
\[ \eta^{-1}_1(\eta_1(a)) = \eta^{-1}_1\left( \mathsf{E}(a), \mathsf{D}(a) \right)= \mathsf{E}(a) + \mathsf{P}\left( \mathsf{D}(a) \right) = a \]
For the other direction, that $\eta_1 \circ \eta^{-1}_1 = \mathsf{id}_{\mathsf{ker}\left( \mathsf{D} \right) \rtimes M}$, we compute that: 
\[ \eta_1(\eta^{-1}_1(c,m)) = \eta_1\left( c +  \mathsf{P}(m) \right) = (\mathsf{E}(c) + \underbrace{\mathsf{E}\left( \mathsf{P}(m) \right)}_{=0}, \underbrace{\mathsf{D}(c)}_{=0} + \mathsf{D}\left( \mathsf{P}(m) \right) = (c,m)  \]
So $\eta_1$ is an isomorphism. Therefore, we conclude that $\eta$ is an isomorphism, and therefore that $\eta^{-1}  = (\eta^{-1}_1, \eta^{-1}_2)$ is a map in $\mathsf{FTC}$. 
\end{proof}

Next, let us build our natural isomorphism of type $\mathcal{F} \circ \mathcal{G} \Rightarrow 1_{\mathsf{ZIN}}$. So starting with an $(A,Z) \in \mathsf{ZIN}$, we have that:
\[\mathcal{F}\left( \mathcal{G}\left( A,Z\right) \right) = (\mathsf{ker}(\pi_Z), Z)\] 
Unpacking this a bit more, define $\epsilon: \mathcal{F}\left( \mathcal{G}\left( A,Z\right) \right) \to (A,Z)$ as the pair $\epsilon \colon \!\!\!=( \epsilon_1, \epsilon_2)$ where $\epsilon_1: \mathsf{ker}(\pi_Z) \to A$ and ${\epsilon_2: Z \to Z}$ are defined as follows: 
\begin{align} \epsilon_1(a,0) = a && \epsilon_2(x) = x
  \end{align}
  Define its inverse $\epsilon^{-1}: (A,Z) \to \mathcal{F}\left( \mathcal{G}\left( A,Z\right) \right) $ as the pair $\epsilon^{-1} \colon \!\!\!=( \epsilon^{-1}_1, \epsilon^{-1}_2)$ where $\epsilon^{-1}_1: A \to \mathsf{ker}(\pi_Z)$ and ${\epsilon^{-1}_2: Z \to Z}$ are defined as follows: 
\begin{align} \epsilon^{-1}_1(a) = (a,0) && \epsilon^{-1}_2(x) = x
  \end{align}

  \begin{lemma} $\epsilon: \mathcal{F} \circ \mathcal{G} \Rightarrow 1_{\mathsf{ZIN}}$ is a natural isomorphism. 
\end{lemma}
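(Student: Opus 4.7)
The plan is to verify three things in turn: that $\epsilon=(\epsilon_1,\epsilon_2)$ is a well-defined morphism in $\mathsf{ZIN}$ for each object $(A,Z)$, that the assignment is natural in $(A,Z)$, and that it is componentwise invertible so that Lemma \ref{lem:ZIN-iso} finishes the job.

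For well-definedness, the first thing I would observe is that $\mathsf{ker}(\pi_Z)=\{(a,0) \mid a\in A\}$, so $\epsilon_1$ is literally the obvious projection onto the $A$-coordinate, which is manifestly a $k$-algebra morphism since multiplication in $A\rtimes Z$ on elements with trivial $Z$-part reduces to $(a,0)(b,0)=(ab,0)$ (both cross terms and the symmetrized product vanish). The map $\epsilon_2=\mathsf{id}_Z$ is trivially a morphism of $k$-modules, so the only substantive compatibilities to check are (\ref{gmodmap}) and (\ref{def:Zin-map}). Condition (\ref{gmodmap}) reduces to verifying $(a,0)\cdot x = a\cdot x$, which is immediate from the definition of the $A\rtimes Z$-action on $Z$ since $(a,0)\cdot x = ax + x\triangleleft 0 = ax$. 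For (\ref{def:Zin-map}), I must check that the Zinbiel operator $\triangleleft_{\iota_Z}$ on $Z$ induced by $\iota_Z$ as in Lemma \ref{lem:P-Zinbiel} agrees with the original $\triangleleft$; unfolding gives $x\triangleleft_{\iota_Z}y = \iota_Z(y)\cdot x = (0,y)\cdot x = 0\cdot x + x\triangleleft y = x\triangleleft y$, as required.

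Naturality is a direct diagram chase. Given $(f,g)\colon (A,Z)\to(A',Z')$ in $\mathsf{ZIN}$, one computes $\mathcal{F}\mathcal{G}(f,g) = (\overline{f\rtimes g},\,g)$, where $\overline{f\rtimes g}$ is the restriction of $f\rtimes g$ to $\mathsf{ker}(\pi_Z)\to \mathsf{ker}(\pi_{Z'})$. On a typical element $(a,0)\in\mathsf{ker}(\pi_Z)$ we have $\overline{f\rtimes g}(a,0)=(f(a),0)$, and then
\[
\epsilon_1\bigl(\overline{f\rtimes g}(a,0)\bigr)=\epsilon_1(f(a),0)=f(a)=f\bigl(\epsilon_1(a,0)\bigr),
\]
while the $Z$-component of naturality is the tautology $g\circ\mathsf{id}_Z=\mathsf{id}_{Z'}\circ g$. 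So the required square commutes.

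Finally, invertibility is immediate: $\epsilon_2=\mathsf{id}_Z$ is trivially an isomorphism, and $\epsilon_1$ is a bijection with inverse $\epsilon_1^{-1}(a)=(a,0)$, since $\epsilon_1(\epsilon_1^{-1}(a))=a$ and $\epsilon_1^{-1}(\epsilon_1(a,0))=(a,0)$. By Lemma \ref{lem:ZIN-iso}, $\epsilon^{-1}=(\epsilon_1^{-1},\epsilon_2^{-1})$ is automatically a $\mathsf{ZIN}$-morphism, so $\epsilon$ is a natural isomorphism. There is no real obstacle here beyond bookkeeping: the only place one has to think rather than compute is identifying that the Zinbiel operator induced from $\iota_Z$ via Lemma \ref{lem:P-Zinbiel} recovers the original operator on $Z$, and this reduces to the fact that the semi-direct action of elements $(0,y)$ is exactly $\triangleleft$ on the $Z$-side.
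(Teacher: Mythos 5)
Your proof is correct and follows essentially the same route as the paper: verify $\epsilon$ is a well-defined $\mathsf{ZIN}$-morphism (with the one substantive point being that $\triangleleft_{\iota_Z}$ recovers the original $\triangleleft$), check naturality on elements $(a,0)$ of $\mathsf{ker}(\pi_Z)$, and invoke Lemma \ref{lem:ZIN-iso} for invertibility. If anything, you spell out a few routine verifications (the algebra structure on $\mathsf{ker}(\pi_Z)$ and condition (\ref{gmodmap})) that the paper leaves as ``straightforward.''
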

\begin{proof} We first need to explain why $\epsilon$ is well-defined, that is, that $(\epsilon_1, \epsilon_2)$ is a map in $\mathsf{ZIN}$. It is straightforward to see that $\epsilon_1$ is indeed a $k$-algebra morphism, and also that $\epsilon_2$ is an $A$-module morphism, that is, that (\ref{gmodmap}) holds. Now observe that: 
\[ x \triangleleft_{\iota_Z} y = \iota_Z(y) x = (0,y) x = x \triangleleft y \]
So the induced Zinbiel algebra structure on $Z$ is the same as the original one, and so clearly $\epsilon_2$ satisfies (\ref{def:Zin-map}). Thus have that $\epsilon = (\epsilon_1, \epsilon_2)$ is indeed a map in $\mathsf{ZIN}$. 

Next, we show that $\epsilon$ is natural. So let $(f,g): (A,Z) \to (A^\prime, Z^\prime)$ be a map in $\mathsf{ZIN}$. Then we have that $\mathcal{F}\left( \mathcal{G}\left( f,g \right) \right) = (\overline{f \rtimes g}, g)$. Now, we first easily compute that: 
  \begin{gather*} 
\epsilon_1\left( \overline{f \rtimes g}(a,0) \right) = \epsilon_1\left( (f \rtimes g)(a,0)   \right) = \epsilon_1(f(a), 0) = f(a) = f(\epsilon_1(a,0)) 
  \end{gather*}
So $\epsilon_1 \circ \overline{f \rtimes g} = f \circ \epsilon_1$, and clearly we have that $\epsilon_2 \circ g = g \circ \epsilon_2$. Together, these give us that $\epsilon \circ \mathcal{F}\left( \mathcal{G}\left( f,g \right) \right)= (f,g) \circ \epsilon$, and so $\epsilon$ is indeed a natural transformation.

Lastly, we need to explain why $\epsilon$ is also an isomorphism. By Lemma \ref{lem:ZIN-iso}, it suffices to show that $\epsilon_1$ and $\epsilon_2$ are isomorphisms. However, they are clearly isomorphism with inverses $\epsilon^{-1}_1$ and $\epsilon^{-1}_2$ respectively. Therefore, we conclude that $\epsilon$ is an isomorphism, and also that $\epsilon^{-1}  = (\epsilon^{-1}_1, \epsilon^{-1}_2)$ is a map in $\mathsf{ZIN}$.  
\end{proof}

Bringing all of this together, we obtain our main result: 

\begin{theorem}\label{thm:FTC-Zin} There is an equivalence of categories $\mathsf{FTC} \simeq \mathsf{ZIN}$, given by the functors $\mathcal{F}$ and $\mathcal{G}$, and the natural isomorphisms $\eta$ and $\epsilon$. 
\end{theorem}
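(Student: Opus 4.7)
The proof of this main theorem is essentially an assembly step: all the substantive work has already been carried out in the preceding lemmas. The plan is simply to observe that the four pieces required to witness an equivalence of categories are already in hand.

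First, the functoriality of $\mathcal{F}\colon \mathsf{FTC} \to \mathsf{ZIN}$ has been verified by combining Lemma~\ref{lem:FTC-ZIN} (well-definedness on objects, producing a $\mathsf{ker}(\mathsf{D})$-Zinbiel algebra via $\triangleleft_\mathsf{P}$) with Lemma~\ref{lem:FTC-ZIN-map} (well-definedness on morphisms, via the restriction $\overline{f}$). Symmetrically, the functoriality of $\mathcal{G}\colon \mathsf{ZIN} \to \mathsf{FTC}$ was established via Lemma~\ref{lem:ZIN-FTC} (producing the FTC-pair on $A \rtimes Z$) and Lemma~\ref{lem:ZIN-FTC-map} (producing the morphism $(f \rtimes g, g)$). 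Preservation of composition and identities in each case is immediate from the pointwise definitions.

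Next, the two natural isomorphisms $\eta\colon 1_{\mathsf{FTC}} \Rightarrow \mathcal{G}\circ\mathcal{F}$ and $\epsilon\colon \mathcal{F}\circ\mathcal{G} \Rightarrow 1_{\mathsf{ZIN}}$ have been constructed explicitly, their components shown to lie in the appropriate category (via Lemmas \ref{lem:FTC-iso} and \ref{lem:ZIN-iso}), their naturality squares checked, and explicit two-sided inverses given. Thus we have concretely exhibited a pair of functors together with natural isomorphisms between their composites and the identity functors.

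Therefore the proof of Theorem~\ref{thm:FTC-Zin} reduces to invoking the standard categorical definition: a quadruple $(\mathcal{F}, \mathcal{G}, \eta, \epsilon)$ consisting of functors in opposite directions together with natural isomorphisms $1 \cong \mathcal{G}\circ\mathcal{F}$ and $\mathcal{F}\circ\mathcal{G} \cong 1$ constitutes an equivalence of categories. There is no real obstacle remaining at this stage, since we are not asserting an adjoint equivalence and so need not verify triangle identities; the bare existence of $\eta$ and $\epsilon$ as natural isomorphisms is already sufficient. The proof therefore consists of a single sentence pointing back to the preceding four lemmas and the two natural isomorphism constructions.
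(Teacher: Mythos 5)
Your proposal is correct and matches the paper exactly: the paper offers no separate argument for Theorem~\ref{thm:FTC-Zin}, simply stating it as the assembly of the two functoriality propositions and the two natural-isomorphism lemmas. Your observation that a bare pair of natural isomorphisms suffices (no triangle identities needed, since an adjoint equivalence is not claimed) is also the right justification.
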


Let us consider the equivalence on augmented FTC-pairs and $k$-Zinbiel algebras. For an augmented FTC-pair $\xymatrix{ A \ar@/^/[r]^-{\mathsf{D}}  &  \ar@/^/[l]^-{\mathsf{P}}  M
  }$, since $k \cong \mathsf{ker}\left( \mathsf{D} \right)$, we have that $\mathcal{F}\left(\xymatrix{ A \ar@/^/[r]^-{\mathsf{D}}  &  \ar@/^/[l]^-{\mathsf{P}}  M
  } \right) \cong (k, M)$. On the other hand, for a $k$-Zinbiel algebra $Z$, since $\mathsf{ker}\left( \pi_Z \right) \cong k$, we have that $\mathcal{G}\left(k,Z\right) =  \xymatrix{ k \rtimes Z \ar@/^1pc/[r]^-{\pi_Z}  &  \ar@/^1pc/[l]^-{\iota_Z}  Z
  }$ which is an augmented FTC-pair. Thus, we obtain an equivalence between the category of augmented FTC-pairs and the category of $k$-Zinbiel algebras, as desired. 

\begin{corollary}\label{cor:aFTC-Zin} There is an equivalence of categories $\mathsf{FTC}_{aug} \simeq \mathsf{ZIN}_k$. 
\end{corollary}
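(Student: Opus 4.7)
The plan is to deduce this corollary from Thm \ref{thm:FTC-Zin} by checking that the functors $\mathcal{F}$ and $\mathcal{G}$, together with the natural isomorphisms $\eta$ and $\epsilon$, restrict appropriately to $\mathsf{FTC}_{aug}$ and $\mathsf{ZIN}_k$. I would first identify $\mathsf{ZIN}_k$ with the full subcategory of $\mathsf{ZIN}$ spanned by objects of the form $(k, Z)$. This identification is canonical: the only $k$-algebra endomorphism of $k$ is the identity, so any map $(k, Z) \to (k, Z')$ in $\mathsf{ZIN}$ is determined solely by its $k$-Zinbiel algebra component, giving an isomorphism of categories between $\mathsf{ZIN}_k$ and this full subcategory.

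Next I would check that $\mathcal{F}$ sends $\mathsf{FTC}_{aug}$ into this subcategory. Given an augmented FTC-pair $\xymatrix{ A \ar@/^/[r]^-{\mathsf{D}}  &  \ar@/^/[l]^-{\mathsf{P}}  M }$, by Def \ref{def:aFTC} the canonical $k$-algebra morphism $\hat{\mathsf{u}}: k \to \mathsf{ker}(\mathsf{D})$ is an isomorphism. Transporting the $\mathsf{ker}(\mathsf{D})$-Zinbiel algebra structure on $M$ produced by Lemma \ref{lem:FTC-ZIN} along $\hat{\mathsf{u}}$ yields a $k$-Zinbiel algebra structure on $M$, so (up to the identification above) $\mathcal{F}$ takes values in $\mathsf{ZIN}_k$. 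On morphisms, this restriction is also well-defined since a map $(f,g)$ in $\mathsf{FTC}$ between augmented FTC-pairs induces a morphism of $k$-algebras $\mathsf{ker}(\mathsf{D}) \to \mathsf{ker}(\mathsf{B})$ which, after identifying both kernels with $k$, must equal $1_k$.

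Then I would check that $\mathcal{G}$ sends $\mathsf{ZIN}_k$ into $\mathsf{FTC}_{aug}$. For a $k$-Zinbiel algebra $Z$, the FTC-pair $\mathcal{G}(k, Z) = \xymatrix{ k \rtimes Z \ar@/^1pc/[r]^-{\pi_Z}  &  \ar@/^1pc/[l]^-{\iota_Z}  Z}$ has $\mathsf{ker}(\pi_Z) = \{(a, 0) : a \in k\}$, and the unit map $\mathsf{u}: k \to k \rtimes Z$, $a \mapsto (a, 0)$, factors as an isomorphism $k \cong \mathsf{ker}(\pi_Z)$. Hence $\mathcal{G}(k, Z) \in \mathsf{FTC}_{aug}$.

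With these observations, since $\mathsf{FTC}_{aug}$ is full in $\mathsf{FTC}$ and $\mathsf{ZIN}_k$ corresponds to a full subcategory of $\mathsf{ZIN}$ preserved by the two functors, the components of $\eta$ and $\epsilon$ at augmented objects (resp. $k$-Zinbiel objects) restrict to isomorphisms between the restricted functors, yielding the desired equivalence $\mathsf{FTC}_{aug} \simeq \mathsf{ZIN}_k$. The only real obstacle is the bookkeeping for the identification of $\mathsf{ZIN}_k$ with the full subcategory of $\mathsf{ZIN}$ on the objects $(k, Z)$; once this is in place, everything follows directly from Thm \ref{thm:FTC-Zin}.
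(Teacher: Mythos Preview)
Your proposal is correct and follows essentially the same approach as the paper: the paper simply observes that $\mathcal{F}$ carries an augmented FTC-pair to an object isomorphic to $(k,M)$ (since $\mathsf{ker}(\mathsf{D})\cong k$) and that $\mathcal{G}(k,Z)$ is augmented (since $\mathsf{ker}(\pi_Z)\cong k$), and then concludes the restricted equivalence from Thm~\ref{thm:FTC-Zin}. Your version is more carefully fleshed out, particularly in making explicit the identification of $\mathsf{ZIN}_k$ with the full subcategory of $\mathsf{ZIN}$ on objects $(k,Z)$ and in checking the behaviour on morphisms, but the underlying argument is the same.
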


As mentioned in the introduction, generalizing these main results to the non-commutative setting is straightforward, which gives us an equivalence between the category of dendriform algebras and the category of the non-commutative versions of FTC pairs. 

\section{Other Constructions of FTC-Pairs}\label{sec:constructions}

In this final section, we provide a way of constructing an FTC-pair from an integration, and also a way of constructing an FTC-pair from a derivation. So, throughout this section, fix a base commutative ring $k$. 

Constructing an FTC-pair from an integration is straightforward since we already know how to construct a Zinbiel algebra from an integration. Indeed, given an integration $\mathsf{P}: A \to M$, by Lemma \ref{lem:P-Zinbiel}, we know that $M$ is a $k$-Zinbiel algebra. In other words, $M$ is an object in $\mathsf{ZIN}_k$. Thus passing $M$ through the equivalence $\mathsf{ZIN}_k \simeq \mathsf{FTC}_{aug}$ results in the augmented FTC-pair:

\begin{lemma}\label{lem:int-FTC} If $\mathsf{P}: A \to M$ is an integration, then $\xymatrix{k \rtimes M \ar@/^1pc/[r]^-{\pi_M}  &  \ar@/^1pc/[l]^-{\iota_M}  M
  }$ is an augmented FTC-pair. 
\end{lemma}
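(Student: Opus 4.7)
The plan is to reduce the lemma immediately to the equivalence established in the previous section. Given the integration $\mathsf{P}: A \to M$, Lemma \ref{lem:P-Zinbiel} hands us, with no further work, the structure of a $k$-Zinbiel algebra on $M$ via $m \triangleleft_\mathsf{P} n = \mathsf{P}(n)m$. So $M$ is an object of $\mathsf{ZIN}_k$, and therefore $(k, M)$ is an object of $\mathsf{ZIN}$ (taking the base algebra to be $k$ itself).

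Next, I would simply apply the functor $\mathcal{G}: \mathsf{ZIN} \to \mathsf{FTC}$ built in Section \ref{sec:equivalence} to the object $(k, M)$. By definition, $\mathcal{G}(k,M)$ is exactly $\xymatrix{ k \rtimes M \ar@/^1pc/[r]^-{\pi_M}  &  \ar@/^1pc/[l]^-{\iota_M}  M}$, which is an FTC-pair by Lemma \ref{lem:ZIN-FTC}. The multiplication on $k \rtimes M$ here unpacks, using the symmetrized product formula $x \ast_{\triangleleft_\mathsf{P}} y = \mathsf{P}(y)x + \mathsf{P}(x)y$ from (\ref{def:P-Zinbiel-ast}), as
\[ (a,x)(b,y) = \bigl( ab,\; ay + bx + \mathsf{P}(y)x + \mathsf{P}(x)y \bigr), \]
and the $k \rtimes M$-module action on $M$ unpacks as $(a,x)\cdot y = ay + \mathsf{P}(x)y$, but for the proof itself nothing beyond the abstract invocation is required.

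Finally, to see that this FTC-pair is augmented, it suffices to observe that $\mathsf{ker}(\pi_M) = \{ (a,0) : a \in k \}$, so the canonical map $\hat{\mathsf{u}}: k \to \mathsf{ker}(\pi_M)$ sending $a \mapsto (a,0)$ is visibly a $k$-algebra isomorphism; equivalently, the restriction of the equivalence $\mathcal{G}$ to $\mathsf{ZIN}_k$ lands in $\mathsf{FTC}_{aug}$, which is precisely the content of Corollary \ref{cor:aFTC-Zin}. Since every step is either a direct appeal to Lemma \ref{lem:P-Zinbiel}, Lemma \ref{lem:ZIN-FTC}, or Corollary \ref{cor:aFTC-Zin}, there is no genuine obstacle to overcome; the only judgment call is stylistic, namely whether to cite the equivalence wholesale or to rederive the augmentation by hand from the shape of $k \rtimes M$. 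I would favor the former for brevity, since all of the arithmetic has already been done.
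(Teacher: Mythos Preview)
Your proposal is correct and follows exactly the paper's own argument: the paragraph preceding Lemma~\ref{lem:int-FTC} explains the lemma by applying Lemma~\ref{lem:P-Zinbiel} to make $M$ a $k$-Zinbiel algebra and then passing through the equivalence $\mathsf{ZIN}_k \simeq \mathsf{FTC}_{aug}$, and the lemma is stated without a separate proof block. Your additional unpacking of the multiplication and module action is faithful but not needed, as you note.
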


So from any integration, we can build a derivation and a new integration which together satisfy the Fundamental Theorems of Calculus. In particular, we can apply this construction to any Rota-Baxter algebra to obtain an augmented FTC-pair. 

\begin{example} \label{ex:freeRB-FTC} \normalfont Let's apply Lemma \ref{lem:int-FTC} to a free Rota-Baxter algebra \cite[Thm 3.2.1]{guo2012introduction} (another presentation of the free Rota-Baxter algebra construction can be found in \cite[Ex 4]{Blute2019}). For a commutative $k$-algebra $A$, let $\mathsf{RB}(A)$ be the $k$-algebra whose underlying $k$-module is the reduced shuffle algebra $\mathsf{RB}(A) = \mathsf{Sh}_+(A)$, and whose multiplication is given by the \textbf{augmented mixable shuffle product} $\cshuffle$ which is defined on pure tensors $\overline{a}= a_0 \otimes a_1 \otimes  \hdots \otimes a_n$ and $\overline{b}= b_0 \otimes b_1 \otimes \hdots \otimes b_m$ as follows: 
\begin{align*}
    \overline{a} \cshuffle \overline{b} = a_0b_0 \otimes \left( (a_1 \otimes  \hdots \otimes a_n) \shuffle (b_1 \otimes  \hdots \otimes b_m) \right) 
\end{align*}
Then $\mathsf{RB}(A)$ is the free (commutative) Rota-Baxter algebra over $A$ with integration $\mathsf{P}: \mathsf{RB}(A) \to \mathsf{RB}(A)$ defined as follows:
 \begin{align*}
  \mathsf{P}(a_0 \otimes 1) = 1 \otimes a_0 &&   \mathsf{P}\left( a_0 \otimes a_1 \otimes  \hdots \otimes a_n \right) = 1 \otimes  a_0 \otimes a_1 \otimes  \hdots \otimes a_n
  \end{align*}
 However, in general, $\mathsf{RB}(A)$ does not have a derivation which makes it an integro-differential algebra. So, to obtain an FTC-pair, we must instead first consider the induced Zinbiel algebra. It turns out, as explained in the proof of \cite[Thm 5.2.4]{guo2012introduction}, that applying Lemma \ref{lem:P-Zinbiel} to $\mathsf{P}: \mathsf{RB}(A) \to \mathsf{RB}(A)$ gives precisely the Zinbiel algebra structure on the reduced shuffle algebra $\mathsf{Sh}_+(A)$ from Ex \ref{ex:Zin-shuffle}. Then by Ex \ref{ex:FTC-shuffle2}, it follows that applying Lemma \ref{lem:int-FTC} to ${\mathsf{P}: \mathsf{RB}(A) \to \mathsf{RB}(A)}$ gives us (up to isomorphism) the augmented FTC-pair $\xymatrix{ \mathsf{Sh}(A) \ar@/^/[r]^-{\mathsf{D}}  &  \ar@/^/[l]^-{\mathsf{P}}  \mathsf{Sh}_+(A) 
  }$ from Ex \ref{ex:FTC-pairs}.(\ref{ex:FTC-shuffle}). Note, however, that here we started with an algebra, while in Ex \ref{ex:FTC-pairs}.(\ref{ex:FTC-shuffle}) we needed only a module. 
\end{example}

On the other hand, constructing an FTC-pair from a derivation requires more setup. While this setting and construction may seem somewhat ad hoc, it in fact arises quite naturally for many well-known derivations, as we will see in Ex \ref{ex:der-FTC}. Moreover, this setup appears to generalize nicely in any differential category with antiderivatives\footnote{In fact, the inspiration for the notion $\mathsf{D}^\circ$, $\mathsf{L}$, and $\mathsf{K}$ come from the notation used in a differential category with antiderivatives.} \cite{cockett_lemay_2018}. As such, in a follow-up paper, we hope to generalize this construction to build FTC-pairs which involve the (co)free (co)algebras of the (co)monad a differential category with antiderivatives. 

So let's start with a derivation $\mathsf{D}: A \to M$. We also need to have an $A$-linear morphism of dual type ${\mathsf{D}^\circ: M \to A}$, in particular this means that $\mathsf{D}^\circ(am) = a \mathsf{D}^\circ(m)$ for all $a\in A$ and $m \in M$. The last ingredient we need is an idempotent $k$-algebra morphism $\mathsf{E}: A \to A$ such that $\mathsf{E} \circ \mathsf{D}^\circ =0$ and $\mathsf{D} \circ \mathsf{E} =0$. 

The algebra component of our FTC-pair will be $A$, while the module component will be $\mathsf{ker}(\mathsf{E})$ (which is indeed an $A$-module since it is also a subalgebra of $A$). For the derivation, first define the map $\mathsf{L}: A \to A$ as the composite $\mathsf{L} := \mathsf{D}^\circ \circ \mathsf{D}$. Recall that post-composing a derivation by a module morphism results again in a derivation. Therefore, $\mathsf{L}: A \to A$ is a derivation, which makes $A$ into a differential algebra. Now since $\mathsf{E} \circ \mathsf{D}^\circ =0$, it follows that $\mathsf{E} \circ \mathsf{L} =0$. Therefore, we obtain the map $\tilde{\mathsf{L}}: A \to \mathsf{ker}(\mathsf{E})$ defined as $\tilde{\mathsf{L}}(a) = \mathsf{L}(a)$, which will be the derivation of the FTC-pair. For the integration, we will need to make another assumption. First define the map $\mathsf{K}: A \to A$ as $\mathsf{K}(a) = \mathsf{L}(a) + \mathsf{E}(a)$. Now suppose that $\mathsf{K}$ is an isomorphism. Then define $\mathsf{P}: \mathsf{ker}(\mathsf{E}) \to A$ as $\mathsf{P}(a) = \mathsf{K}^{-1}(a)$, and this will be the integration of our FTC-pair. 

\begin{lemma}\label{lem:der-FTC} Let $\mathsf{D}: A \to M$ be a derivation, $\mathsf{D}^\circ: M \to A$ be an $A$-linear morphism, and $\mathsf{E}: A \to A$ be an idempotent $k$-algebra morphism such that $\mathsf{E} \circ \mathsf{D}^\circ =0$ and $\mathsf{D} \circ \mathsf{E} =0$. Suppose that the map $\mathsf{K}: A \to A$, defined as $\mathsf{K}= \mathsf{L} + \mathsf{E}$, is an isomorphism. Then $\xymatrix{A \ar@/^1.15pc/[r]^-{\tilde{\mathsf{L}}}  &  \ar@/^1.15pc/[l]^-{\mathsf{P}}  \mathsf{ker}(\mathsf{E}) 
  }$ is an FTC-pair, where $\tilde{\mathsf{L}}(a) = \mathsf{L}(a)$ and $\mathsf{P}(x) = \mathsf{K}^{-1}(x)$. 
\end{lemma}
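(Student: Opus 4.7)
My plan is to exploit the auxiliary operator $\mathsf{E}$ together with the assumption that $\mathsf{K} = \mathsf{L} + \mathsf{E}$ is invertible in order to reduce the FTC axioms to a handful of short identities, and then to invoke Prop \ref{prop:FTC-integro} rather than checking the Rota-Baxter rule for $\mathsf{P}$ by hand.

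First I would record two key identities: $\mathsf{E} \circ \mathsf{L} = 0$ (immediate from $\mathsf{E} \circ \mathsf{D}^\circ = 0$) and $\mathsf{L} \circ \mathsf{E} = 0$ (immediate from $\mathsf{D} \circ \mathsf{E} = 0$). The first tells me that $\mathsf{L}$ factors through $\mathsf{ker}(\mathsf{E})$, so $\tilde{\mathsf{L}}$ is well-defined; moreover, since $\mathsf{ker}(\mathsf{E})$ is an ideal of $A$ (as $\mathsf{E}$ is a $k$-algebra morphism), the $A$-module action on $\mathsf{ker}(\mathsf{E})$ is just multiplication in $A$, and the Leibniz rule for the derivation $\mathsf{L} = \mathsf{D}^\circ \circ \mathsf{D}$ transfers directly to $\tilde{\mathsf{L}}$. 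Combining both identities with $\mathsf{E}^2 = \mathsf{E}$ gives $\mathsf{K} \circ \mathsf{E} = \mathsf{E} = \mathsf{E} \circ \mathsf{K}$, and applying $\mathsf{K}^{-1}$ on the appropriate side yields the workhorse identities $\mathsf{K}^{-1} \circ \mathsf{E} = \mathsf{E} = \mathsf{E} \circ \mathsf{K}^{-1}$.

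FTC2 then drops out of the rewrite $\mathsf{L} = \mathsf{K} - \mathsf{E}$: for any $a \in A$, $\mathsf{P}(\tilde{\mathsf{L}}(a)) = \mathsf{K}^{-1}(\mathsf{K}(a) - \mathsf{E}(a)) = a - \mathsf{E}(a)$, so the candidate $\mathsf{D}$-constant is $c_a = \mathsf{E}(a)$, which lies in $\mathsf{ker}(\tilde{\mathsf{L}})$ because $\mathsf{L} \circ \mathsf{E} = 0$, and satisfies $c_{ab} = c_a c_b$ because $\mathsf{E}$ is multiplicative. For FTC1, for any $m \in \mathsf{ker}(\mathsf{E})$ I would compute $\tilde{\mathsf{L}}(\mathsf{P}(m)) = \mathsf{L}(\mathsf{K}^{-1}(m)) = m - \mathsf{E}(\mathsf{K}^{-1}(m)) = m - \mathsf{E}(m) = m$, using $\mathsf{E} \circ \mathsf{K}^{-1} = \mathsf{E}$ and $\mathsf{E}(m) = 0$.

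To finish, rather than verifying the Rota-Baxter rule for $\mathsf{P}$ directly, I would apply Prop \ref{prop:FTC-integro}: since $\tilde{\mathsf{L}}$ is a derivation and $\mathsf{P}$ is $k$-linear (being the restriction of $\mathsf{K}^{-1}$), it suffices to check FTC1 together with the hybrid Rota-Baxter rule. Substituting $\mathsf{P}(\tilde{\mathsf{L}}(x)) = x - \mathsf{E}(x)$ and using $\mathsf{E}(ab) = \mathsf{E}(a)\mathsf{E}(b)$, both sides of that rule collapse to $2ab - a\mathsf{E}(b) - b\mathsf{E}(a)$, so it holds. No single step should be a serious obstacle; the real care goes into bookkeeping the domains and codomains of the restricted operators $\tilde{\mathsf{L}}$ and $\mathsf{P}$, but each well-definedness check reduces to one of the identities $\mathsf{E}\mathsf{L}=0$, $\mathsf{L}\mathsf{E}=0$, $\mathsf{E}\mathsf{K}^{-1}=\mathsf{E}$, $\mathsf{K}^{-1}\mathsf{E}=\mathsf{E}$ recorded at the start.
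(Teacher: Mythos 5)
Your proof is correct, and the first two thirds (well-definedness of $\tilde{\mathsf{L}}$ and $\mathsf{P}$, FTC1, FTC2) follow essentially the same route as the paper: everything reduces to the identities $\mathsf{E}\circ\mathsf{L}=0$, $\mathsf{L}\circ\mathsf{E}=0$, and hence $\mathsf{K}\circ\mathsf{E}=\mathsf{E}=\mathsf{E}\circ\mathsf{K}$ and $\mathsf{K}^{-1}\circ\mathsf{E}=\mathsf{E}=\mathsf{E}\circ\mathsf{K}^{-1}$, with $c_a=\mathsf{E}(a)$ as the $\mathsf{D}$-constant. Where you genuinely diverge is the last step: the paper verifies the Rota--Baxter rule for $\mathsf{P}=\mathsf{K}^{-1}$ directly, by first noting that $\mathsf{K}$ restricted to the ideal $\mathsf{ker}(\mathsf{E})$ satisfies $\mathsf{K}(xy)=x\mathsf{K}(y)+y\mathsf{K}(x)$ (which requires the small extra observation that $\mathsf{K}^{-1}$ preserves $\mathsf{ker}(\mathsf{E})$, via $\mathsf{E}\circ\mathsf{K}^{-1}=\mathsf{E}$) and then conjugating the Leibniz rule through $\mathsf{K}^{-1}$. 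You instead invoke Prop \ref{prop:FTC-integro}, which only asks for a derivation, a $k$-linear $\mathsf{P}$, FTC1, and the hybrid Rota--Baxter rule; since $\mathsf{P}(\tilde{\mathsf{L}}(a))=a-\mathsf{E}(a)$ and $\mathsf{E}$ is multiplicative, the hybrid rule collapses to the polynomial identity $(a-\mathsf{E}(a))(b-\mathsf{E}(b))+ab-\mathsf{E}(a)\mathsf{E}(b)=a(b-\mathsf{E}(b))+b(a-\mathsf{E}(a))$, which holds. This is a legitimate and arguably cleaner use of the paper's own machinery (there is no circularity, as Prop \ref{prop:FTC-integro} is independent of this lemma); what it buys you is avoiding the trickiest direct computation, at the cost of making the integration property of $\mathsf{P}$ a corollary rather than an explicit verification. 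Note also that once you cite Prop \ref{prop:FTC-integro}, your separate FTC2 check becomes redundant (the proposition delivers FTC2 as part of the FTC-pair conclusion), though it does no harm and identifies the constants concretely.
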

\begin{proof} As explained above, $\mathsf{L}$ is a derivation and therefore $\tilde{\mathsf{L}}: A \to \mathsf{ker}(\mathsf{E})$ is clearly also a derivation. Before proving that $\mathsf{P}: \mathsf{ker}(\mathsf{E}) \to A$ is an integration, we will first show that FTC1 and FTC2 hold. 

Starting with FTC1, for an $x \in \mathsf{ker}(\mathsf{E})$, we easily compute that: 
\[ \tilde{\mathsf{L}}\left( \mathsf{P}(x) \right) = \mathsf{L}\left( \mathsf{K}^{-1}(x) \right) + \underbrace{\mathsf{E}(x)}_{=0} = \mathsf{K}\left( \mathsf{K}^{-1}(x) \right) = x \]
So $\tilde{\mathsf{L}} \circ \mathsf{P} = \mathsf{id}_{\mathsf{ker}(\mathsf{E})}$, and thus FTC1 holds. To show FTC2, first observe that since $\mathsf{D} \circ \mathsf{E} =0$, we have that $\mathsf{L} \circ \mathsf{E} =0$. Therefore, $\tilde{\mathsf{L}} \circ \mathsf{E} =0$ and so for an $a \in A$, we denote $c_a = \mathsf{E}(a) \in \mathsf{ker}(\tilde{\mathsf{L}})$. Since $\mathsf{E}$ is an algebra morphism, we also have that $c_a c_b = c_{ab}$. Moreover, since $\mathsf{E}$ is idempotent, $\mathsf{L} \circ \mathsf{E} =0$ also implies that $\mathsf{K} \circ \mathsf{E} = \mathsf{E}$. Then post-composing both sides by $\mathsf{K}^{-1}$ gives us that $\mathsf{K}^{-1} \circ \mathsf{E} = \mathsf{E}$. Then for all $a \in A$, we compute that: 
\[  \tilde{\mathsf{L}}\left( \mathsf{P}(a) \right) + c_a = \mathsf{K}^{-1}\left( \mathsf{L}(a) \right) + \mathsf{E}(a) =  \mathsf{K}^{-1}\left( \mathsf{L}(a) \right)+  \mathsf{K}^{-1}\left( \mathsf{E}(a) \right) =  \mathsf{K}^{-1}\left( \mathsf{K}(a)  \right) = a  \]
So we conclude that FTC2 holds as desired. 

Lastly, we need to show that $\mathsf{P}: \mathsf{ker}(\mathsf{E}) \to A$ is indeed an integration. To do so, first observe that for $x \in \mathsf{ker}(\mathsf{E})$ that $\mathsf{K}(x) = \mathsf{L}(x)$ and therefore, since $\mathsf{L}$ is a derivation, we have that the following equality holds for all $x,y \in \mathsf{ker}(\mathsf{E})$:
\[ \mathsf{K}(xy) = x \mathsf{K}(y) + y \mathsf{K}(x) \]
Then for $x,y \in \mathsf{ker}(\mathsf{E})$, we compute that: 
\begin{gather*}
\mathsf{P}(x) \mathsf{P}(y) = \mathsf{K}^{-1}(x) \mathsf{K}^{-1}(y) = \mathsf{K}^{-1}\left( \mathsf{K}\left( \mathsf{K}^{-1}(x) \mathsf{K}^{-1}(y) \right) \right)  = \mathsf{K}^{-1}\left( \mathsf{K}^{-1}(x) \mathsf{K}\left( \mathsf{K}^{-1}(y) \right) \right)  + \mathsf{K}^{-1}\left( \mathsf{K}^{-1}(y) \mathsf{K}\left( \mathsf{K}^{-1}(x) \right) \right)\\
= \mathsf{K}^{-1}\left( \mathsf{K}^{-1}(x) y \right)  + \mathsf{K}^{-1}\left( \mathsf{K}^{-1}(y) x \right) = \mathsf{P}\left( \mathsf{P}(x) y \right)  + \mathsf{P}\left( \mathsf{P}(y) x \right)
\end{gather*}
So $\mathsf{P}$ satisfies the Rota-Baxter rule and is therefore an integration. So we conclude that $\xymatrix{A \ar@/^1.15pc/[r]^-{\tilde{\mathsf{L}}}  &  \ar@/^1.15pc/[l]^-{\mathsf{P}}  \mathsf{ker}(\mathsf{E}) 
  }$ is an FTC-pair as desired. 
\end{proof}

\begin{example}\normalfont \label{ex:der-FTC} Let's apply Lemma \ref{lem:der-FTC} to the derivations from Ex \ref{ex:FTC-pairs}.(\ref{ex:FTC-poly}) and (\ref{ex:FTC-smooth}) to obtain new FTC-pairs. 
\begin{enumerate}[{\em (i)}]
\item Let $k$ be a field of characteristic zero and consider the polynomial ring $k[x]$ with the standard derivation ${\mathsf{D}: k[x] \to k[x]}$, so $\mathsf{D}(x^{n}) = n x^{n-1}$. Take $\mathsf{D}^\circ: k[x] \to k[x]$ to be the map which multiplies a polynomial by $x$, so $\mathsf{D}^\circ(p(x))= p(x)x$, and take $\mathsf{E}: k[x] \to k[x]$ to be the map which evaluates a polynomial at zero, $\mathsf{E}(p(x)) = p(0)$ -- which is clearly an idempotent algebra morphism. Moreover, since the derivative of a constant is zero, we have that $\mathsf{D} \circ \mathsf{E} =0$, and we also clearly have that $\mathsf{E} \circ \mathsf{D}^\circ =0$ as well. As a result, $\mathsf{L}: k[x] \to k[x]$ and ${\mathsf{K}: k[x] \to k[x]}$ are worked out to be given as follows: 
\begin{align*}
    \mathsf{L}(x^n) = n x^n && \mathsf{K}(x^n) = \begin{cases} 1 & \text{if $n=0$, so $x^0=1$} \\
  n x^n & \text{if $n\geq 1$}
    \end{cases}
\end{align*}
Then $\mathsf{K}$ is an isomorphism whose inverse is given as follows: 
\begin{align*}
  \mathsf{K}^{-1}(x^n) = \begin{cases} 1 & \text{if $n=0$, so $x^0=1$} \\
    \frac{1}{n} x^n & \text{if $n\geq 1$}
    \end{cases}
\end{align*}
While these maps $\mathsf{L}$, $\mathsf{K}$, and $\mathsf{K}^{-1}$ may seem somewhat unknown, they play a crucial role in the differential category with antiderivatives of polynomials \cite[Ex 7.3]{cockett_lemay_2018}. Lastly, observe that the kernel of $\mathsf{E}$ is given by reduced polynomials, that is, $\mathsf{ker}(\mathsf{E}) = k[x]_+ = \lbrace p(x) \in k[x] \vert~ p(0) = 0 \rbrace$. Therefore, by Lemma \ref{lem:der-FTC}, we obtain an FTC-pair $\xymatrix{ k[x] \ar@/^/[r]^-{\tilde{\mathsf{L}}}  &  \ar@/^/[l]^-{\mathsf{P}} k[x]_+}$ where concretely: 
\begin{align*}
    \tilde{\mathsf{L}}(x^n) = n x^n && \mathsf{P}(x^{n+1}) = \frac{1}{n+1} x^{n+1}
\end{align*}
Note that the $\tilde{\mathsf{L}}$-constants are precisely the constants, so $\mathsf{ker}\left( \tilde{\mathsf{L}} \right) \cong k$ and thus this FTC-pair is also augmented. 
\item Consider the $\mathbb{R}$-algebra of smooth functions $\mathcal{C}^\infty(\mathbb{R})$ with the standard derivation $\mathsf{D}: \mathcal{C}^\infty(\mathbb{R}) \to \mathcal{C}^\infty(\mathbb{R})$, so $\mathsf{D}(f)(x) = f^\prime(x)$. Similarly as above, define $\mathsf{D}^\circ: \mathcal{C}^\infty(\mathbb{R}) \to  \mathcal{C}^\infty(\mathbb{R})$ as $\mathsf{D}^\circ\left( f \right)(x) = f(x)x$, and define ${\mathsf{E}: \mathcal{C}^\infty(\mathbb{R}) \to \mathcal{C}^\infty(\mathbb{R})}$ as $\mathsf{E}(f)(x) = f(0)$. Again, we see that we have $\mathsf{E} \circ \mathsf{D}^\circ =0$ and $\mathsf{D} \circ \mathsf{E} =0$. So ${\mathsf{L}: \mathcal{C}^\infty(\mathbb{R}) \to \mathcal{C}^\infty(\mathbb{R})}$ and $\mathsf{K}: \mathcal{C}^\infty(\mathbb{R}) \to \mathcal{C}^\infty(\mathbb{R})$ are worked respectively to be:
\begin{align*}
    \mathsf{L}(f)(x) = f^\prime(x)x && \mathsf{K}(f)(x) = f^\prime(x)x + f(0)
\end{align*}
As was shown in \cite[Prop 6.1]{cruttwell2019integral}, $\mathsf{K}$ is an isomorphism with inverse given as follows:
\begin{align*}
  \mathsf{K}^{-1}(x^n) = \int \limits^1_0 \int^1_0 f(st x)x  \mathsf{d}s \mathsf{d}t + f(0)
\end{align*}
See \cite[Sec 6]{cruttwell2019integral} for an explanation on how $\mathsf{L}$, $\mathsf{K}$, and $\mathsf{K}^{-1}$ arise in calculus.  Lastly, observe that the kernel of $\mathsf{E}$ is given by reduced smooth functions, that is, $\mathsf{ker}(\mathsf{E}) = \mathcal{C}^\infty(\mathbb{R})_+ = \lbrace f \in \mathcal{C}^\infty(\mathbb{R}) \vert~ f(0) = 0 \rbrace$. Therefore, by Lemma \ref{lem:der-FTC}, we obtain an FTC-pair $\xymatrix{ \mathcal{C}^\infty(\mathbb{R}) \ar@/^/[r]^-{\tilde{\mathsf{L}}}  &  \ar@/^/[l]^-{\mathsf{P}} \mathcal{C}^\infty(\mathbb{R})_+}$ where concretely: 
\begin{align*}
    \tilde{\mathsf{L}}(f)(x) = f^\prime(x)x && \mathsf{P}(f)(x) = \int \limits^1_0 \int\limits^1_0 f(st x)x  \mathsf{d}s \mathsf{d}t
\end{align*}
Note that the $\tilde{\mathsf{L}}$-constants are precisely the constant functions, so $\mathsf{ker}\left( \tilde{\mathsf{L}} \right) \cong \mathbb{R}$ and thus this FTC-pair is also augmented. 
\end{enumerate}
\end{example}

We can also apply Lemma \ref{lem:der-FTC} when starting from a differential algebra $A$ with derivation $\mathsf{D}$. In fact, in this scenario, we no longer need the $A$-module morphism $\mathsf{D}^\circ$. To see this, consider the module of Kähler differential of $A$ \cite[Def 26.C]{matsumura1970commutative} $\Omega(A)$ with universal derivation $\mathsf{d}: A \to \Omega(A)$. By the universal property of the Kähler differentials \cite[Prop 26.1]{matsumura1970commutative}, there exists a unique $A$-module morphism $\mathsf{D}^\sharp: \Omega(A) \to A$ such that $\mathsf{D}^\sharp \circ \mathsf{d} = \mathsf{D}$. In other words, this says that $\mathsf{L} = \mathsf{D}$. Then when we also have our idempotent $k$-algebra morphism $\mathsf{E}: A \to A$, we can define our map ${\mathsf{K}: A \to A}$ which is now $\mathsf{K} = \mathsf{D} + \mathsf{E}$. So we can ask $\mathsf{K}$ to be an isomorphism, from which we can then obtain an FTC-pair. 

\begin{corollary} \label{cor:difallg-FTC} Let $A$ be a differential algebra with derivation $\mathsf{D}: A \to A$ and equipped with an idempotent $k$-algebra morphism $\mathsf{E}: A \to A$ such that $\mathsf{D} \circ \mathsf{E} = 0 = \mathsf{E} \circ \mathsf{D}$. If the map $\mathsf{K}: A \to A$ defined as $\mathsf{K} = \mathsf{D} + \mathsf{E}$ is an isomorphism, then $\xymatrix{A \ar@/^1.15pc/[r]^-{\tilde{\mathsf{D}}}  &  \ar@/^1.15pc/[l]^-{\mathsf{P}}  \mathsf{ker}(\mathsf{E}) 
  }$ is an FTC-pair, where $\tilde{\mathsf{D}}(a) = \mathsf{D}(a)$ and $\mathsf{P}(a) = \mathsf{K}^{-1}(a)$. 
\end{corollary}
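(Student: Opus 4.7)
My plan is to deduce Corollary \ref{cor:difallg-FTC} as a direct application of Lemma \ref{lem:der-FTC}, using the module of Kähler differentials as a bridge to supply the missing $A$-linear morphism. The one non-obvious point is that the required hypothesis $\mathsf{E} \circ \mathsf{D}^\sharp = 0$ must be verified; the paper's preamble to the corollary mentions building $\mathsf{D}^\sharp$ but does not explicitly check this compatibility.

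First, I would let $\mathsf{d}: A \to \Omega(A)$ denote the universal derivation into the module of Kähler differentials. Since $A$ is itself an $A$-module and $\mathsf{D}: A \to A$ is a derivation, the universal property yields a unique $A$-linear map $\mathsf{D}^\sharp: \Omega(A) \to A$ with $\mathsf{D}^\sharp \circ \mathsf{d} = \mathsf{D}$. This is precisely the role played by $\mathsf{D}^\circ$ in Lemma \ref{lem:der-FTC}, but now with $M = \Omega(A)$ and derivation $\mathsf{d}$.

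Next, I would verify the hypotheses needed to invoke Lemma \ref{lem:der-FTC}. Of these, $\mathsf{E}$ being an idempotent $k$-algebra morphism and $\mathsf{D} \circ \mathsf{E} = 0$ are part of the assumption, but one must still check $\mathsf{E} \circ \mathsf{D}^\sharp = 0$. This is the only step that requires a short argument: every element of $\Omega(A)$ is a finite sum $\sum_i b_i \mathsf{d}(a_i)$, so by $A$-linearity of $\mathsf{D}^\sharp$ and the fact that $\mathsf{E}$ preserves multiplication,
\[
\mathsf{E}\bigl(\mathsf{D}^\sharp(\textstyle\sum_i b_i \mathsf{d}(a_i))\bigr) \;=\; \mathsf{E}\bigl(\textstyle\sum_i b_i \mathsf{D}(a_i)\bigr) \;=\; \textstyle\sum_i \mathsf{E}(b_i)\,\mathsf{E}(\mathsf{D}(a_i)) \;=\; 0,
\]
where the last equality uses $\mathsf{E} \circ \mathsf{D} = 0$ termwise.

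With all hypotheses confirmed, the derived map $\mathsf{L} = \mathsf{D}^\sharp \circ \mathsf{d}$ from Lemma \ref{lem:der-FTC} is exactly $\mathsf{D}$ by construction of $\mathsf{D}^\sharp$, so the map denoted $\mathsf{K}$ in the lemma coincides with the $\mathsf{K} = \mathsf{D} + \mathsf{E}$ of the corollary. The hypothesis that $\mathsf{K}$ is an isomorphism is then assumed, and Lemma \ref{lem:der-FTC} directly produces the FTC-pair $\xymatrix{A \ar@/^1.15pc/[r]^-{\tilde{\mathsf{D}}} & \ar@/^1.15pc/[l]^-{\mathsf{P}} \mathsf{ker}(\mathsf{E})}$ with $\tilde{\mathsf{D}}(a) = \mathsf{D}(a)$ and $\mathsf{P}(a) = \mathsf{K}^{-1}(a)$, as desired. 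The only real obstacle is the Kähler-differentials computation ensuring $\mathsf{E} \circ \mathsf{D}^\sharp = 0$; once that is noted, everything else is immediate from the lemma.
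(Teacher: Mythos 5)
Your route is the paper's route: the corollary is obtained by instantiating Lemma \ref{lem:der-FTC} with the Kähler-differential data $\mathsf{d}: A \to \Omega(A)$ and $\mathsf{D}^\sharp: \Omega(A) \to A$, and your verification that $\mathsf{E} \circ \mathsf{D}^\sharp = 0$ (which the paper leaves implicit) is correct: $\Omega(A)$ is generated over $A$ by the elements $\mathsf{d}(a)$, and multiplicativity of $\mathsf{E}$ together with $\mathsf{E} \circ \mathsf{D} = 0$ kills each term of $\sum_i b_i \mathsf{D}(a_i)$. However, you have not in fact ``confirmed all hypotheses.'' When Lemma \ref{lem:der-FTC} is applied with the derivation $\mathsf{d}: A \to \Omega(A)$ playing the role of its $\mathsf{D}: A \to M$, the lemma's hypothesis ``$\mathsf{D} \circ \mathsf{E} = 0$'' reads $\mathsf{d} \circ \mathsf{E} = 0$, i.e.\ that $\mathsf{E}(a)$ is annihilated by the \emph{universal} derivation (equivalently, by every derivation out of $A$); this is a strictly stronger condition than the corollary's assumption $\mathsf{D} \circ \mathsf{E} = 0$ about the single composite $\mathsf{D} = \mathsf{D}^\sharp \circ \mathsf{d}$, and it is not among the corollary's hypotheses. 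The repair is easy and worth stating explicitly: in the proof of Lemma \ref{lem:der-FTC} the hypothesis $\mathsf{D} \circ \mathsf{E} = 0$ is used only through its consequence $\mathsf{L} \circ \mathsf{E} = 0$, and in the present instantiation $\mathsf{L} = \mathsf{D}^\sharp \circ \mathsf{d} = \mathsf{D}$, so $\mathsf{L} \circ \mathsf{E} = 0$ is literally the corollary's assumption and the lemma's proof goes through verbatim. The paper glosses over this same point, so your write-up is no worse than the original, but since you explicitly set out to check the lemma's hypotheses you should either check this one too or note why the weaker consequence suffices.
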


\bibliographystyle{plain}      
\bibliography{FTCZINref}   
\end{document}